

\documentclass[11pt]{article}
\usepackage[left=3cm,right=3cm,top=3cm,bottom=3cm]{geometry} 
\usepackage{amsmath,amssymb}
\usepackage{amsthm}
\usepackage{graphicx}
\usepackage{float}
\usepackage{caption}
\usepackage{subcaption}
\usepackage{setspace}
\usepackage{tikz}
\usepackage{verbatim}

\newcommand*{\rom}[1]{\expandafter\@slowromancap\romannumeral #1@}

\newcommand{\sA}{\mathcal A}

\newcommand{\sC}{\mathcal C}

\newcommand{\sF}{\mathcal F}
\newcommand{\sG}{\mathcal G}
\newcommand{\sI}{\mathcal I}
\newcommand{\sL}{\mathcal L}

\newcommand{\sS}{\mathcal S}
\newcommand{\sT}{\mathcal T}
\newcommand{\Tau}{\mathcal T}

\newcommand{\sX}{\mathcal X}
\newcommand{\R}{\mathbb R}
\newcommand{\E}{\mathbb E}
\newcommand{\F}{\mathbb F}
\newcommand{\G}{\mathbb G}
\newcommand{\Prob}{\mathbb P}

\newcommand{\ul}{\underline{\lambda}}
\newcommand{\ol}{\overline{\lambda}}
\newcommand{\hl}{\hat{\lambda}}

\newtheorem{thm}{Theorem}

\newtheorem{prop}{Proposition}

\newtheorem{lem}{Lemma}
\newtheorem{cor}{Corollary}
\newtheorem{rem}{Remark}

\newtheorem{defn}{Definition}
\setlength{\parindent}{4mm}

\begin{document}

\title{Constrained Optimal Stopping, Liquidity and Effort}
\author{David Hobson\thanks{University of Warwick, Coventry, CV4 7AL, d.hobson@warwick.ac.uk} \and Matthew Zeng\thanks{University of Warwick, Coventry, CV4 7AL, m.zeng@warwick.ac.uk}}
\date{\today}
\maketitle

\begin{abstract}
In a classical optimal stopping problem in continuous time, the agent can choose any stopping time without constraint. Dupuis and Wang (Optimal stopping with random intervention times, Advances in Applied Probability, 34, 141--157, 2002) introduced a constraint on the class of admissible  stopping times which was that they had to take values in the set of event times of an exogenous, time-homogeneous Poisson process. This can be thought of as a model of finite liquidity. In this article we extend the analysis of Dupuis and Wang to allow the agent to choose the rate of the Poisson process. Choosing a higher rate leads to a higher cost. Even for a simple model for the stopped process and a simple call-style payoff, the problem leads to a rich range of optimal behaviours which depend on the form of the cost function. Often the agent accepts the first offer --- if they are not going to accept an offer then there is no point in putting in effort to generate offers, and thus there may be no offers to accept or decline ---  but for some set-ups this is not the case.
\end{abstract}

\section{Introduction}
\label{sec:intro}
Optimal stopping problems are widespread in economics and finance (and other fields) where they are used to model asset sales, investment times and the exercise of American-style options. In typical applications an agent observes a stochastic process, 
possibly representing the price of an asset, and chooses a stopping time in order to maximise the expected discounted value of a payoff which is contingent upon the process evaluated at that time.

Implicit in the classical version of the above problem is the idea that the agent can sell the asset (decide to invest, exercise the option) at any moment of their choosing, and for financial assets traded on an exchange this is a reasonable assumption. However, for other classes of assets, including those described as `real assets' by, for example, Dixit and Pindyck~\cite{DixitPindyck:94}, this assumption may be less plausible. Here we are motivated by an interpretation of the optimal stopping problem above in which an agent has an asset for sale, but can only complete the sale if they can find a buyer, and candidate buyers are only available at certain isolated instants of time.

In this work we model the arrival of candidate purchasers as the event times of a Poisson process. When a candidate purchaser arrives the agent can choose to sell to that purchaser, or not; if a sale occurs then the problem terminates, otherwise the candidate purchaser is lost, and the problem continues. If the Poisson process has a constant rate, then the analysis falls into the framework studied by Dupuis and Wang \cite{DupuisWang:03} and Lempa~\cite{Lempa:07}.

Dupuis and Wang \cite{DupuisWang:03} and Lempa~\cite{Lempa:07} discuss optimal stopping problems, but closely related is the work of Rogers and Zane~\cite{RogersZane:99} in the context of portfolio optimisation. Rogers and Zane consider an optimal investment portfolio problem under the hypothesis that the portfolio can only be rebalanced at event times of a Poisson process of constant rate, see also Pham and Tankov~\cite{PhamTankov:08} and Ang, Papanikolaou and Westerfield~\cite{AngPapanikolaouWesterfield:14}. The study of optimal stopping problems when the stopping times are constrained to be event times of an exogenous process is relatively unexplored, but Guo and Liu~\cite{GuoLiu:05} study a problem in which the aim is to maximise a payoff contingent upon the maximum of an exponential Brownian motion and Menaldi and Robin~\cite{MenaldiRobin:16} extend the analysis of Dupuis and Wang~\cite{DupuisWang:03} to consider non-exponential inter-arrival times. As a generalisation of optimal stopping, Liang and Wei~\cite{LiangWei:16} consider an optimal switching problem when the switching times are constrained to be event times of a Poisson process.

In this article we consider a more sophisticated model of optimal stopping under constraints in which the agent may expend effort in order to increase the frequency of the arrival times of candidate buyers. (Note that the problem remains an optimal stopping problem, since at each candidate sale opportunity the agent optimises between continuing and selling.) In our model the agent's instantaneous effort rate $E_t$ affects the instantaneous rate $\Lambda_t$ of the Poisson process, so that the candidate sale opportunities become the event times of an inhomogeneous Poisson process, where the agent chooses the rate. However, this effort is costly, and the agent incurs a cost per unit time which depends on the instantaneous effort rate. The objective of the agent is to maximise the expected discounted payoff net of the expected discounted costs. In particular, if $X=(X_t)_{t \geq 0}$ with $X_0=x$ is the asset price process, $g$ is the payoff function, $\beta$ is the discount factor, $E = (E_t)_{t \geq 0}$ is the chosen effort process, $\Lambda = (\Lambda_t)_{t \geq 0}$ given by $\Lambda_t = \Psi(E_t)$ is the instantaneous rate of the Poisson process, $C_E$ is the cost function so that the cost incurred per unit time is $C_E(E_t)$, and $T_\Lambda$ is the set of event times of a Poisson process, rate $\Lambda$, then the objective of the agent is to maximise the objective function
\begin{equation}
\label{eq:mainproblem}
\E^x \left[ e^{- \beta \tau} g(X_\tau) - \int_0^\tau e^{-\beta s} C_E(E_s) ds \right]
\end{equation}
over admissible effort processes $E$ and $T_\Lambda$-valued stopping times $\tau$.
Our goal is to solve for the value function, the optimal stopping time and the optimal effort, as represented by the optimal control process $E$.
In fact, typically it is possible to use the rate of the Poisson process as the control variable by setting $C(\Lambda_t) = C_E(E_t) = C_E \circ \Psi^{-1} (\Lambda_t)$. In the context of the problem it is natural to assume that $\Psi$ and $C_E$ are increasing functions, so that $\Psi^{-1}$ exists, and $C$ is increasing.

Our focus is on the case where $X$ is an exponential Brownian motion, but the general case of a regular, time-homogeneous diffusion can be reduced to this case at the expense of slightly more complicated technical conditions. See Lempa~\cite{Lempa:07} for a discussion in the constant arrival rate case. We begin by rigorously stating the form of the problem we will study. Then we proceed to solve for the effort process and stopping rule in \eqref{eq:mainproblem}. It turns out that there are two distinctive cases depending on the shape of $C$ or more precisely on the finiteness or otherwise of $\lim_{\lambda \uparrow \infty} \frac{C(\lambda)}{\lambda}$. Note that it is not clear {\em a priori} what shape $C= C_E \circ \Psi^{-1}$ should take, beyond the fact that it is increasing. Generally one might expect an increasing marginal cost of effort and a law of diminishing returns to effort which would correspond to convex $C_E$, concave $\Psi$ and convex $C$. But a partial reverse is also conceivable: effort expended below a threshold has little impact, and it is only once effort has reached a critical threshold that extra effort readily yields further stopping opportunities; in this case $\Psi$ would be convex and $C$ might be concave.

One outcome of our analysis is that the agent exerts effort to create a positive stopping rate only if they are in the region where stopping is optimal. Outside this region, they typically exert no effort, and there are no stopping opportunities. Typically therefore, (although we give a counterexample in an untypical case) the agent stops at the first occasion where stopping is possible and the optimal stopping element of the problem is trivial.

\section{The set-up}
We work on a filtered probability space $(\Omega, \sF, \Prob, \F = (\sF_t)_{t \geq 0})$ which satisfies the usual conditions and which supports a Brownian motion and an independent Poisson process. On this space there is a regular, time-homogeneous diffusion process $X= (X_t)_{t \geq 0}$ driven by the Brownian motion. We will assume that $X$ is exponential Brownian motion with volatility $\sigma$ and drift $\mu$ and has initial value $x$; then
\[ dX_t = \sigma dW_t + \mu dt, \hspace{10mm} X_0 = x. \]

Here $\mu$ and $\sigma$ are constants with $\mu<\beta$. The agent has a perpetual option with increaing payoff $g: \R_+ \mapsto \R_+$ of linear growth. In our examples $g$ is an American call: $g(x)=(x-K)_+$. Then, in the classical setting, the problem of the agent would be to maximise $\E[e^{-\beta \tau} g(Y_\tau)]$ over stopping times $\tau$. Note that the linear growth condition, together with $\mu < \beta$, is sufficient to ensure that this classical problem is well-posed.

We want to introduce finite liquidity into this problem, in the sense that we want to incorporate the phenomena that in order to sell the agent needs to find a buyer, and such buyers are in limited supply. In the simplest case buyers might arrive at event times of a time-homogeneous Poisson process with rate $\lambda$, and then at each event time of the Poisson process the agent faces a choice of whether to sell to this buyer at this moment or not; if yes then the sale occurs and the optimal stopping problem terminates, if no then the buyer is irreversibly lost, and the optimal stopping problem continues. We want to augment this problem to allow the agent to expend effort (via networking, research or advertising) in order to increase the flow of buyers. There is a cost of searching in this way --- the higher the effort the higher the rate of candidate stopping times but also the higher the search costs. Note that once the asset is sold, effort expended on searching ceases, and search costs thereafter are zero by fiat.

Let $\sA_E$ be the set of admissible effort processes. We assume that $E \in \sA_E$ if $E = (E_t)_{t \geq 0}$ is an adapted process such that $E_t  \in I_{E}$ for all $t \in [0,\infty)$ where $I_{E} \subset \R_+$ is an interval which is independent of time. Then, since $\Lambda_t = \Psi(E_t)$ we find $E \in \sA_E$ if and only if $\Lambda \in \sA$ where $\Lambda \in \sA$ if $\Lambda$ is adapted and $\Lambda_t \in I$ for all $t$ where $I = \Psi(I_{E})$. Note that $I$ is an interval in $\R_+$, and we take the lower and upper endpoints to be $\underline{\lambda}$ and $\overline{\lambda}$ respectively.

Recall that $T_\Lambda$ is the set of event times of an inhomogeneous Poisson process with rate $\Lambda$. Then $T^\Lambda = \{ T^\Lambda_1, T^\Lambda_2, \ldots \}$ where $0<T^\Lambda_1$ and $T^{\Lambda}_n < T^{\Lambda}_{n+1}$ almost surely. Let $\sT(T_\Lambda)$ be the set of $T_\Lambda$-valued stopping times and let $\sA$ be the set of admissible rate functions. Then, after a change of independent variable the problem is to find
\begin{equation}
\label{eq:Hdef}
 H(x) = \sup_{\Lambda \in \sA} \sup_{\tau \in \sT(T_\Lambda)} \E^x \left[ e^{- \beta \tau} g(X_\tau) - \int_0^\tau e^{- \beta s} C(\Lambda_s) ds \right],
\end{equation}
together with the optimal rate function $\Lambda^* = (\Lambda^*_t)_{t \geq 0}$ and optimal stopping rule $\tau^* \in \sT(T_\Lambda)$.

In addition to the set of admissible controls, we also consider the subset of integrable controls $\sI \subseteq \sA$ where $\Lambda \in \sI = \sI(I,C)$ is an adapted process with $\Lambda_t \in I$ for which $\E^x[\int_0^\infty e^{- \beta s} C(\Lambda_s) ds ] < \infty$. As mentioned above we have that $\E^x \left[ e^{- \beta \tau} g(X_\tau) \right] < \infty$ for any admissible $\Lambda$ and any stopping rule, and hence there is no loss of generality in restricting the search for the optimal rate function to the set of integrable controls.

The stopping rule is easily identified in feedback form. Let $T^0_\Lambda = T_\Lambda \cup \{ 0 \}$ and let $H^0$ be the value of the problem {\em conditional on there being a buyer available at time 0}, so that
\[ H^0(x) = \sup_{\Lambda \in \sA} \sup_{\tau \in \sT(T^0_\Lambda)} \E^x \left[ e^{- \beta \tau} g(X_\tau) - \int_0^\tau e^{- \beta s} C(\Lambda_s) ds  \right]. \]
Then, it is optimal to stop immediately if and only if the value of stopping is at least as large as the value of continuing and
\[ H^0(x) = \max \{ g(x), H(x) \} . \]

It follows that if $\Lambda = (\Lambda_t)_{t \geq 0}$ is a fixed admissible rate process, and if $H^0_\Lambda$ and $H_\Lambda$ denote the respective value functions then, writing $T_1 = T^\Lambda_1$ for the first event time of the Poisson process with rate $\Lambda$,
\begin{eqnarray*}
H_\Lambda(x) & = & \sup_{\tau \in \sT(T_\Lambda)} \E^x \left[ e^{- \beta \tau} g(X_\tau) - \int_0^\tau e^{- \beta s} C(\Lambda_s) ds \right] \\
& = & \sup_{\tau \in \sT(T_\Lambda)} \E^x \left[ e^{- \beta T_1} \E \left[ \left.  e^{-\beta (\tau - T_1)} g(X_\tau) - \int_{T_1}^{\tau} e^{- \beta (s - T_1)} C(\Lambda s) ds  \right| \sF_{T_1} \right]  - \int_0^{T_1} e^{-\beta s} C(\Lambda_s) ds \right]  \\
& = & \E^x \left[ e^{-\beta T_1} H^0_\Lambda(X_{T_1}) - \int_0^{\infty} I_{ \{ s < T_1 \} }e^{-\beta s} C(\Lambda_s) ds \right] \\
& = & \E^x \left[ \int_0^\infty \Lambda_s e^{- \int_0^s \Lambda_u du} e^{-\beta s} H^0_\Lambda(X_{s}) ds - \int_0^{\infty} e^{- \int_0^s \Lambda_u du} e^{-\beta s} C(\Lambda_s) ds \right] \\
& = & \E^x \left[ \int_0^\infty e^{- (\beta s + \int_0^s \Lambda_u du)} (\Lambda_s H^0_\Lambda(X_{s}) ds - C(\Lambda_s)) ds \right] .
\end{eqnarray*}
Taking a supremum over admissible rate processes $\Lambda \in \sA$ we find
\[ H(x) = \sup_{\Lambda \in \sA} \E^x \left[ \int_0^\infty  e^{- \int_0^t (\beta + \Lambda_s) ds} \left( \Lambda_t H_\Lambda^0(X_t) - C(\Lambda_t) \right) dt \right] ,
\]
and this is the problem we aim to solve. Writing $\Lambda^*$ for the optimal rate process we expect $H$ to solve
\[ H(x) = \E^x \left[ \int_0^\infty  e^{- \int_0^t (\beta + \Lambda^*_s) ds} \left( \Lambda^*_t \{ g(X_t) \vee H(X_t)\} - C(\Lambda^*_t) \right) dt \right] .
\]

\subsection{Some results for classical problems}
\label{ssec:classical}
For future reference we record some results for classical problems in which agents can stop at any instant.

First, let $\sT([0,\infty))$ be the set of all stopping times and define
\[ w_K(x) := \sup_{\tau \in \sT([0,\infty))} \E^x[ e^{-\beta \tau}(X_\tau - K)_+ ]. \]
(Imagine a standard, perpetual, American-style call option with strike $K$, though valuation is not taking place under the equivalent martingale measure.)
Classical arguments (McKean~\cite{McKean:65}, Peskir and Shiryaev~\cite{PeskirShiryaev:06}) give that $0 < w_K < x$ (the upper bound holds since we are assuming $\beta > \mu$) and that there exists a constant $L = \frac{\theta}{\theta - 1} K$ where $\theta = \left( \frac{1}{2} - \frac{\mu}{\sigma^2} \right) + \sqrt{\left( \frac{1}{2} - \frac{\mu}{\sigma^2} \right)^2 + \frac{2 \beta}{\sigma^2}} $ such that
\[ w_K(x) = \left\{ \begin{array}{ll} (x-K)_+, & x> L; \\
                                       (L-K) L^{-\theta} x^\theta, & 0 < x \leq L .\end{array} \right. \]
For future reference set $\phi = \left( \frac{1}{2} - \frac{\mu}{\sigma^2} \right) - \sqrt{\left( \frac{1}{2} - \frac{\mu}{\sigma^2} \right)^2 + \frac{2 \beta}{\sigma^2}}$. Then $\phi<0<1<\theta$ and $\theta$ and $\phi$ are the roots of $Q_0=0$ where $Q_\lambda(\psi)=\frac{1}{2} \sigma^2 \psi(\psi-1) + \mu \psi - (\beta+ \lambda)$.

Second, define
\begin{equation} w_{K,\epsilon,\delta}(x) = \sup_{\tau \in \Tau([0,\infty))} \E^x \left[ e^{-\beta \tau} \{ (X_\tau - K)_+ - \epsilon \} - \delta \int_0^\tau e^{-\beta s}  ds \right]. \label{eq:wKJD}
\end{equation}
(Imagine a perpetual, American-style call option with strike $K$, in which the agent pays a fee or transaction cost $\epsilon$ to exercise the option, and pays a running cost $\delta$ per unit time until the option is exercised.) Note that $w_{K,0,0} \equiv w_K$. It turns out that there are two cases. In the first case when $\epsilon \geq \delta/\beta$, when $X$ is small it is more cost effective to pay the running cost indefinitely than to pay the exercise fee. We find
\[ w_{K,\epsilon,\delta}(x) = w^{K+\epsilon - \delta/\beta}(x) - \delta/\beta . \]
In the second case when $\epsilon< \delta/\beta$, when $X$ is small it is cost-effective to stop immediately, even though the payoff is zero, because paying the fee is cheaper than paying the running cost indefinitely. In this case we find that $w = w_{K,\epsilon,\delta}$ and a pair of thresholds $l^* = l^*(K,\epsilon,\delta)$ and  $L^*=L^*(K,\epsilon,\delta)$ with $0 < l^* < K + \epsilon < L^*$ satisfy the variational problem
\[  \{ \mbox{$w$ is $C^1$; $w = -\epsilon$ on $(0,l^*)$; $\sL w - \beta w = \delta$ on $(l^*,L^*)$; $w=x-K - \epsilon$ on $(L^*, \infty)$ } \} \]

Returning to our problem with limited stopping opportunities, one immediate observation is that $H(x) \leq w_K(x)$. Conversely, if $\Lambda \equiv 0$ is admissible then $H(x) \geq - \frac{C(0)}{\beta}$.

\section{Heuristics}
From the Markovian structure of the problem we expect that the (unknown) value function $H$ and optimal rate function $\Lambda^*$ are time-homogeneous functions of the asset price only.

Let $M^\Lambda = (M^\Lambda_t)_{t \geq 0}$ be given by
\[ M^\Lambda_t = e^{-\int_0^t ( \beta + \Lambda_s) ds} H(X_t) + \int_0^t e^{-\int_0^u ( \beta + \Lambda_s) ds} \left[ \Lambda_u H^0(X_u) - C(\Lambda_u) \right] du, \]
and let $\sL^X$ denote the generator of $X$ so that $\sL^X f = \frac{\sigma^2}{2} f'' + \mu f'$. Assume that the value function under the optimal strategy $H$ is $C^2$. Then, by It\^{o}'s formula,
\[ dM^{\Lambda}_t = e^{-\int_0^t ( \beta + \Lambda_s) ds}\left\{ \left( \sL^X H(X_t) - (\beta + \Lambda_t) H(X_t) + \Lambda_t (H^0(X_t) - C(\Lambda_t)) \right) dt + \sigma X_t H'(X_t) dW_t \right\}. \]
We expect that $M^\Lambda$ is a super-martingale for any choice of $\Lambda$, and a martingale for the optimal choice. Thus we expect
\[  \sL^X H(X_t) - \beta H(X_t) - \inf_{\Lambda_t} \left\{ C(\Lambda_t) - \Lambda_t [ H^0(X_t) - H(X_t) ]  \right\} = 0. \]
Let $\tilde{C}: \R_+ \mapsto \R$ be the concave conjugate of $C$ so that $\tilde{C}(z) = \inf_{\lambda \geq 0} \{C(\lambda) - \lambda z \}$.
Then we find that $H$ solves
\begin{equation} \sL^X H - \beta H - \tilde{C}(H^0 - H) = 0, \label{eq:Hheuristic} \end{equation}
and a best choice of rate function is
$ \Lambda^*_t = \Lambda^*(X_t)$ where
\begin{equation} \Lambda^*(x) = \Theta(H^0(x)-H(x)) \label{eq:controlq}
\end{equation}
and $\Theta(z) = \mbox{arginf}_\lambda \{ C(\lambda) - \lambda z \}$.
Note that $H^0-H = (g-H)_+$ and that \eqref{eq:Hheuristic} is a second order differential equation and will have multiple solutions. The boundary behaviour near zero and infinity will determine which solution fits the optimal stopping problem.

\subsection{First Example: Quadratic cost functions}
\label{ssec:quadratic}
Suppose $g(x)=(x-K)_+$ for fixed $K>0$. Using terminology from the study of American options and optimal stopping we say that if $X_t > K$ then the process is in-the-money, if $X_t<K$ then the process is out-of-the-money and the region in the domain of $X$ where $\Lambda^*(X)$ is zero is the continuation region $\sC$, and $\sS := \R^+ \setminus \sC$ is the selling region.

Suppose the range of possible values for the rate process is $I=[0,\infty)$ and consider a quadratic cost function $C(\lambda) = a + b \lambda + c\frac{\lambda^2}{2}$ with $a \geq 0$, $b \geq 0$ and $c>0$. Then $\tilde{C}(z) = a - \frac{[(z-b)_+]^2}{2c}$.

Consider first the behaviour of the value function near zero. If $a=0$ then $C(0)=0$, and when $X$ is close to zero the agent may choose not to search for buyers, a strategy which incurs zero cost. There is little chance of the process ever being in-the-money, but nonethelesss the agent delays sale indefinitely. We expect that the continuation region is $(0,L^*)$ for some threshold $L^*$.

Now suppose $a>0$. Now there is a cost to delaying the sale, even when $\Lambda = 0$. If $X$ is small then it is preferable to sell the asset even though the process is out-of-the-money, because in our problem there are no search costs once the asset is sold. In this case we expect the agent to search for buyers when $X$ is small, in order to reduce further costs. Then the continuation region will be $(\ell^*, L^*)$ for some $0<\ell^* < K < L^* < \infty$.

Consider now the behaviour for large $x$. In this case we can look for an expansion for the solution of \eqref{eq:Hheuristic} of the form
\[ H(x) = A_1 x + A_{1/2} \sqrt{x} + A_0 + O(x^{-1/2}) \]
for constants $A_1$, $A_{1/2}$ and $A_0$ to be determined. Using the fact that $H(x) \leq w_K(x)$ so that $H$ is of at most linear growth we find
\begin{equation}
H(x) = x - \sqrt{2c(\beta - \mu)} \sqrt{x} - \left\{ K + b - c\left[ \beta - \frac{\mu}{2} + \frac{\sigma^2}{8} \right] \right\} + \ldots \label{eq:Gexpansion}
\end{equation}
Numerical results (see Figure~\ref{fig:valuefun}) show that this expansion is very accurate for large $x$.

\subsubsection{Purely quadratic cost: $a=0=b$}
\label{sssec:pure}
In this case we expect that the continuation region is $(0,L^*)$ for a threshold level $L^*$ to be determined. For a general threshold $L$, and writing $H_L$ for the solution to \eqref{eq:Hheuristic} with $H(0)=0$ and $H(L)=L-K$ we find that $H_L$ solves
\begin{equation}
\sL^X h - \beta h = \frac{1}{2c} (\{g-h\}_+)^2,
\label{eq:ODEa=0}
\end{equation}
and then that $H_L(x) = \frac{L-K}{L^\theta}x^\theta$ on $x \leq L$. On $(L,\infty)$, $H_L$ solves \eqref{eq:ODEa=0} subject to $H_L(L) = (L-K)$ and $H_L'(L) = \theta \frac{L-K}{L}$. This procedure gives us a family $(H_L)_{L \geq K}$ of potential value functions, each of which is $C^1$. Finally we can determine the threshold level $L$ we need by choosing the value $L^*$ for which $H_{L^*}$ has linear growth at infinity.

\begin{figure*}[!ht]
    \centering
    \begin{subfigure}[t]{0.45\textwidth}
        \includegraphics[width=1\textwidth]{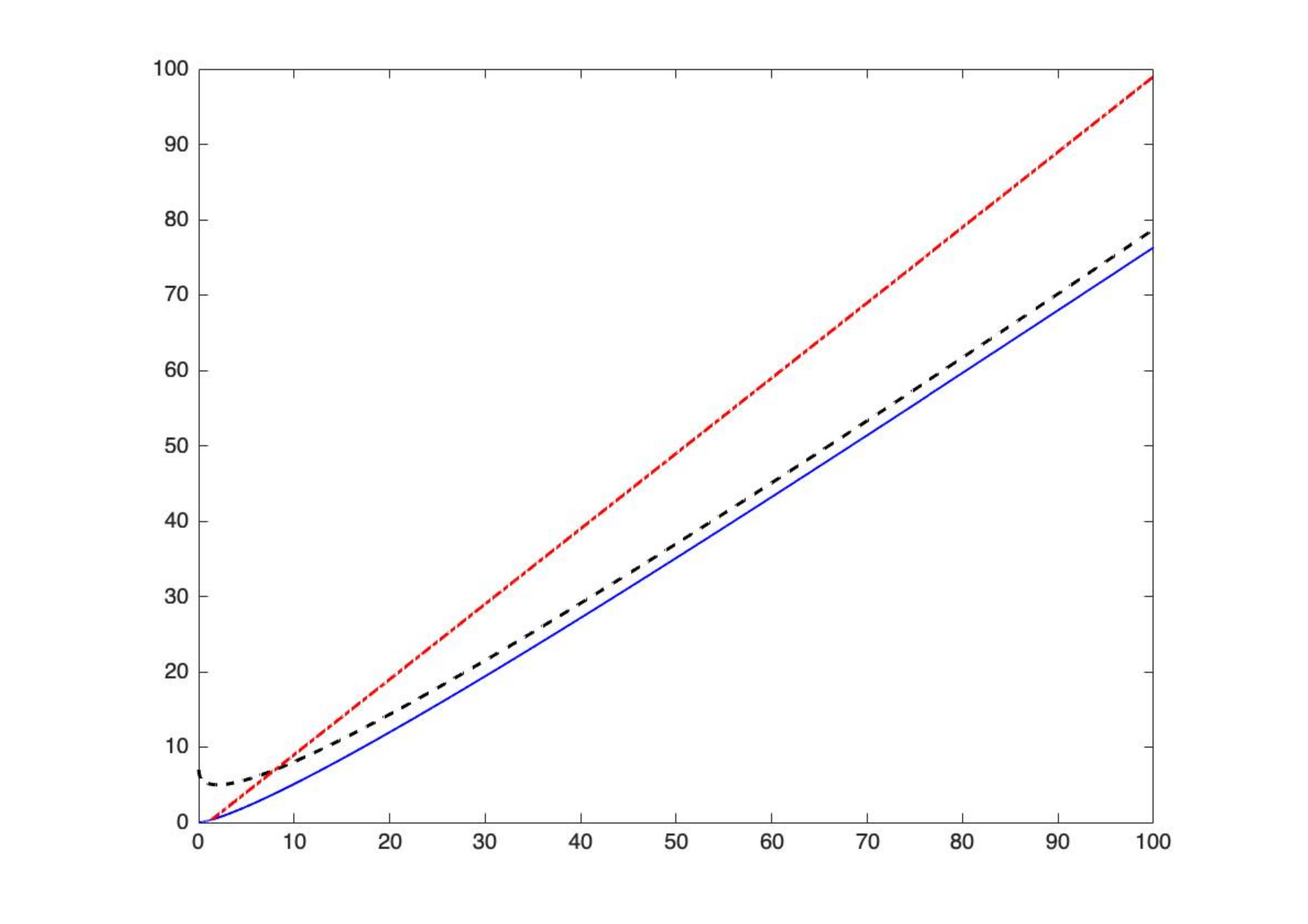}
        \caption{For large x}
    \end{subfigure}%
    ~
    \begin{subfigure}[t]{0.45\textwidth}
        \centering
        \includegraphics[width=1\textwidth]{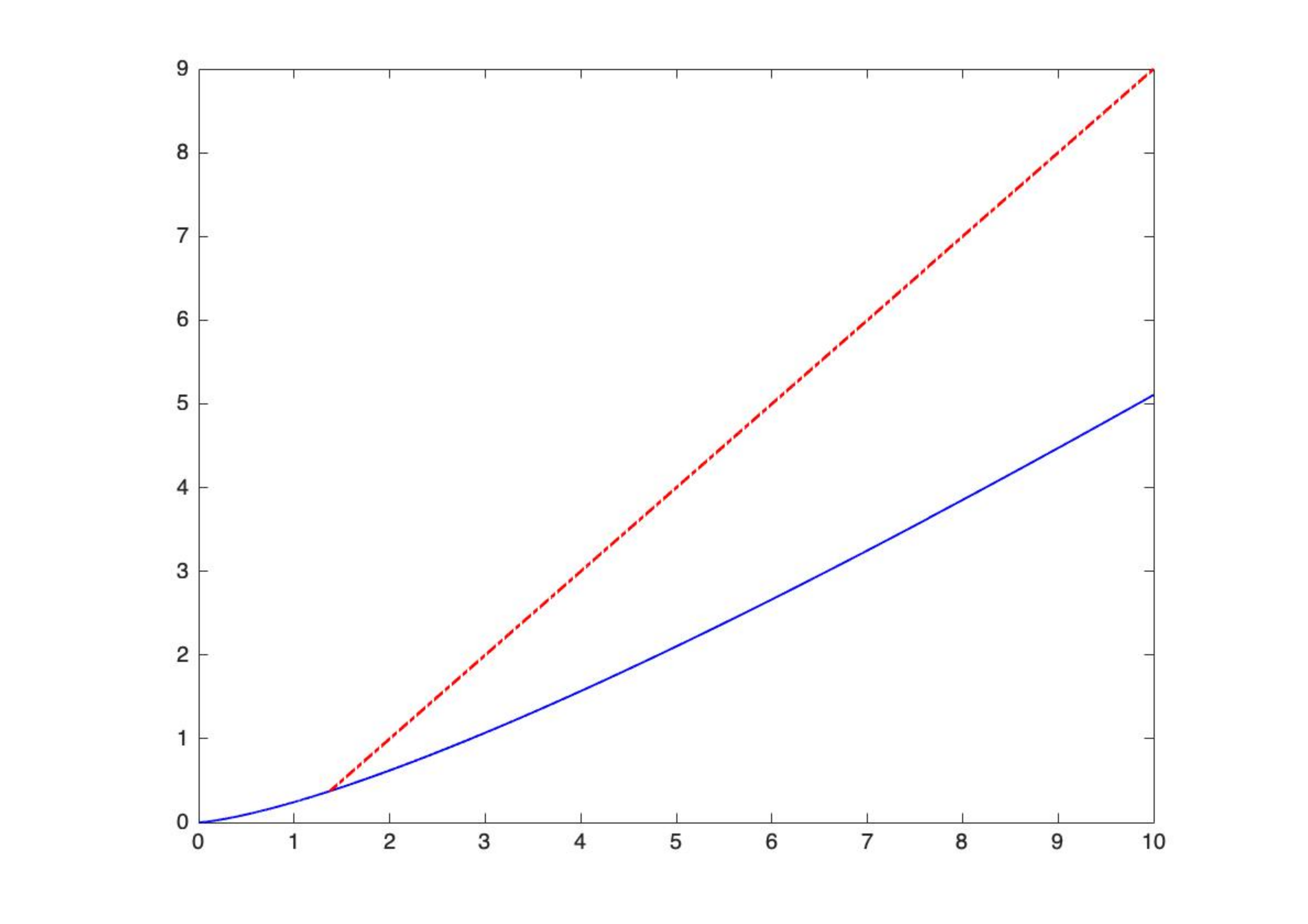}
        \caption{For small x}
    \end{subfigure}
    \caption{$(\beta,\mu,\sigma,K,a,b,c) =(5, 3 ,2,1,0,0,2)$. In both sub-figures the solid curved line represents $H_{L^*}$; the straight line represents $g \vee H_{L^*}$ on $\{ x : g(x) \geq H_{L^*}(x) \}$ and the dashed line in the left sub-figure is the expansion for $H$ in \eqref{eq:Gexpansion}. The optimal threshold is seen in the right sub-figure to be at $L^*= 1.35$. }
    \label{fig:valuefun}
 \end{figure*}

The linear growth solution $H_{L^*}$ is shown in Figure~\ref{fig:valuefun}, both for large $x$ and for moderate $x$. From Figure~\ref{fig:valuefun}(b) we see that the continuation region is $\sC = (0,1.35)$ and that the stopping region $\sS = [1.35,\infty)$. We also see that the expansion for $H$ given in \eqref{eq:Gexpansion} gives a good approximation of our numerical solution for large $x$.

  \begin{figure}[H] \center
\includegraphics [width=0.45\linewidth] {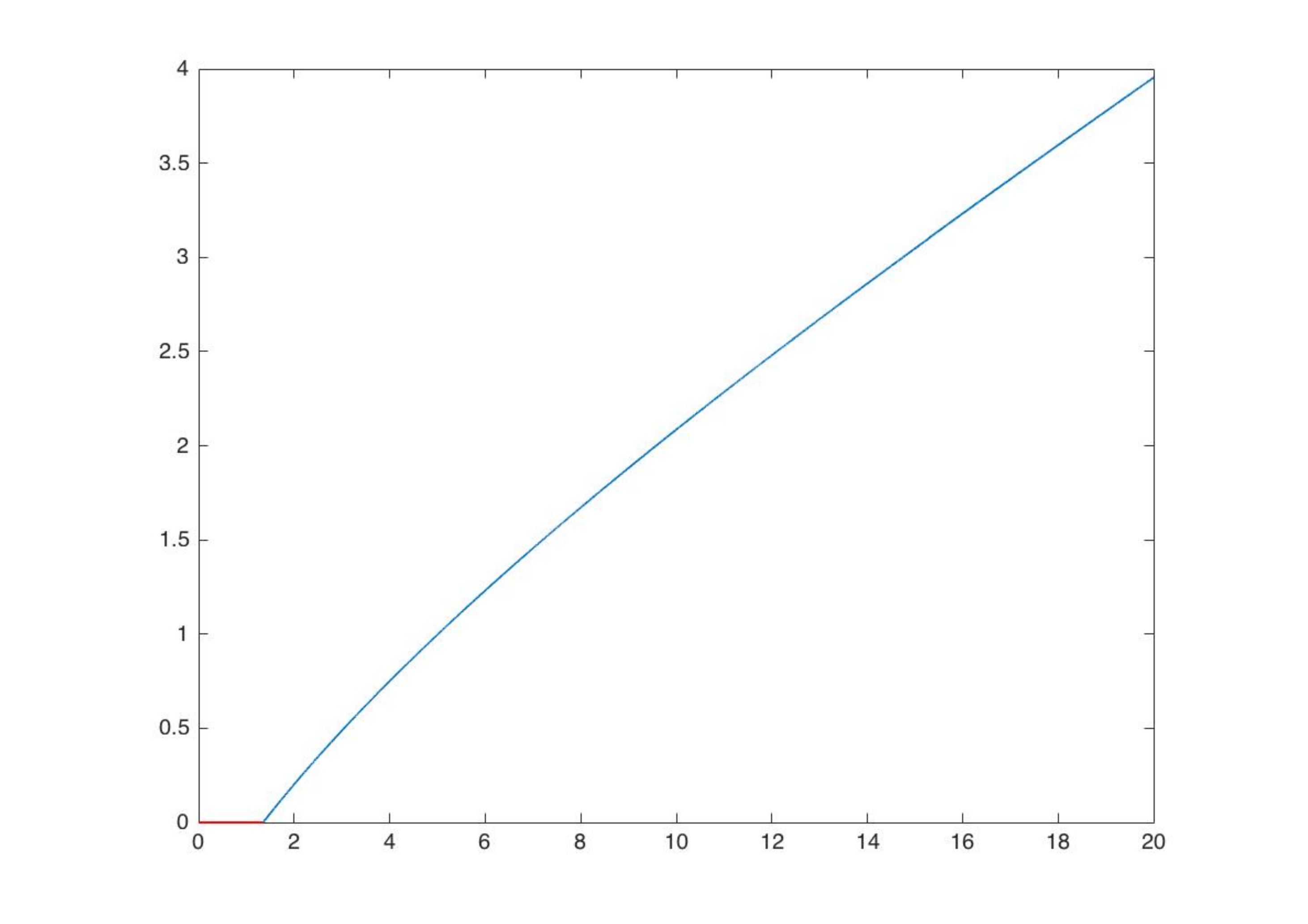}
\caption{ $(\beta,\mu,\sigma,K,a,b,c) =(5, 3 ,2,1,0,0,2)$; this figure plots the optimal control $\Lambda^*$ given by \eqref{eq:controlq} as a function of wealth level $x$.   }
\label{fig:lambda}
\end{figure}

Figure~\ref{fig:lambda} shows the optimal control. We see that $\Lambda^*$ is zero on the continuation region $\sC=(0,L)$ and that $\Lambda^*$ is increasing and concave on the stopping region $\sS=[L,\infty)$.  The agent behaves rationally in the sense that on the continuation region where continuing is worth more than stopping, the agent is unwilling to stop and this is reflected by the minimal efforts spent on searching (i.e. $\Lambda^*(x)=0, \forall x \in  \sC$); similarly, on the stopping region, stopping is getting more and more valuable relative to continuing as the price process gets deeper in-the-money, and the agent is incentivised to spend more effort on searching for stopping opportunities.

We discuss the cases of $a>0$ and $b>0$ in Section~\ref{sec:examples}.

\section{Verification}

In this section we show that the heuristics are correct, and that the value to the stochastic problem is given by the appropriate solution of the differential equation. Although the details are different, the structure of the proof follows Dupuis and Wang~\cite{DupuisWang:03}.

Suppose, as throughout, that $X$ is exponential Brownian motion with $\mu <\beta$ and $g$ is of linear growth.

\begin{defn}
$(\tau, \Lambda)$ is admissible if $\Lambda$ is a non-negative, $I$-valued, adapted process and $\tau \in \sT(T^\Lambda)$. 
\end{defn}

Note that a consequence of the definition is that we insist that $\tau \leq T^{\Lambda}_\infty := \lim_n T^\Lambda_n$. Moreover, we may have $T_k = \infty$: in this case we may take $\tau = \infty$, whence we have $e^{-\beta \tau} g(X_\tau) = 0$ noting that $\lim_{t \uparrow \infty} e^{-\beta t} g(X_t)=0$ almost surely.

\begin{defn}
$(\tau, \Lambda)$ is integrable if $(\tau, \Lambda)$ is admissible and $\E[ \int_0^\tau e^{- \beta s} C(\Lambda_s) ds ] < \infty$.
\end{defn}

Clearly, if $(\tau,\Lambda)$ is integrable, then $(T^\Lambda_1, \Lambda)$ is integrable.

\begin{lem}
\label{lem:Gcomparison}
Let $G$ be an increasing, convex solution to
\begin{equation}  \sL^X G - \beta G - \tilde{C}((g - G)_+) = 0, \label{eq:Gdef} \end{equation}
and suppose that $G$ is of at most linear growth. Set $G^0 = G \vee g$.

Then for any integrable, admissible strategy $(\tau, \Lambda)$,
\begin{equation}
G(x) 
 \geq  \E^x \left[ e^{-\beta T^\Lambda_1} G^0(X_{T^\Lambda_1}) I_{ \{ T^\Lambda_1 < \infty \} } - \int_0^{T^\Lambda_1} e^{- \beta s} C(\Lambda_s) ds \right].  \label{eq:Hleq}
\end{equation}
\end{lem}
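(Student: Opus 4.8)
The plan is to build the natural candidate supermartingale out of $G$, show its drift is non-positive using the defining ODE \eqref{eq:Gdef} together with the conjugacy inequality for $\tilde C$, and then identify its terminal value with the right-hand side of \eqref{eq:Hleq} via the intensity representation already carried out for $H_\Lambda$ in Section~2. Concretely, for an integrable admissible $(\tau,\Lambda)$ I would set
\[ M_t = e^{-\int_0^t(\beta+\Lambda_s)ds} G(X_t) + \int_0^t e^{-\int_0^u(\beta+\Lambda_s)ds}\left[\Lambda_u G^0(X_u) - C(\Lambda_u)\right]du, \]
so that $M_0 = G(x)$. Since $G$ is convex it is $C^1$ with an a.e.-defined, locally bounded second derivative (the ODE forces $G'' = \tfrac{2}{\sigma^2 x^2}\left[\beta G + \tilde C((g-G)_+) - \mu x G'\right]$ wherever the right side is continuous), so $G \in W^{2,\infty}_{loc}$ and the standard It\^o formula applies with no local-time correction at the single kink of $g$.

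Applying It\^o, the $dt$-coefficient of $M$ is $e^{-\int_0^t(\beta+\Lambda_s)ds}$ times $\sL^X G(X_t) - \beta G(X_t) - \left[C(\Lambda_t) - \Lambda_t\left(G^0(X_t)-G(X_t)\right)\right]$, where I use the identity $G^0 - G = (g-G)_+$. By the definition $\tilde C(z) = \inf_\lambda\{C(\lambda)-\lambda z\}$ we have $C(\Lambda_t) - \Lambda_t z \ge \tilde C(z)$ at $z=(g-G)_+(X_t)$, so the bracket is at least $\tilde C((g-G)_+(X_t))$ and the whole coefficient is bounded above by $\sL^X G - \beta G - \tilde C((g-G)_+) = 0$. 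Hence $M$ is a local supermartingale, with local-martingale part $\int_0^t e^{-\int_0^u(\beta+\Lambda_s)ds}\sigma X_u G'(X_u)\,dW_u$.

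Localising by $\sigma_n = \inf\{t: X_t \notin (1/n,n)\}\wedge n$ makes $M_{\cdot\wedge\sigma_n}$ a genuine supermartingale (on $[0,\sigma_n]$ the integrand $\sigma X G'(X)$ is bounded), giving $\E^x[M_{t\wedge\sigma_n}]\le G(x)$. I would then pass to the limit by exploiting the sign structure of the three pieces: the boundary term $e^{-\int_0^t(\beta+\Lambda_s)ds}G(X_t)$ tends to $0$ in $L^1$ because $|G|$ has linear growth and $\E^x[e^{-\beta t}X_t]=x e^{-(\beta-\mu)t}\to 0$; the gain term is non-negative and non-decreasing (as $G^0\ge g\ge 0$ and $\Lambda\ge 0$), so monotone convergence applies; and the cost term is controlled in $L^1$ by the integrability of $(T^\Lambda_1,\Lambda)$. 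Letting $n\to\infty$ and then $t\to\infty$ yields
\[ \E^x\left[\int_0^\infty e^{-\int_0^u(\beta+\Lambda_s)ds}\left(\Lambda_u G^0(X_u) - C(\Lambda_u)\right)du\right] \le G(x). \]
I would finish by identifying the left-hand side with the right-hand side of \eqref{eq:Hleq}, conditioning on the filtration $\sF^X$ generated by $X$ and using $\Prob(T^\Lambda_1 > t \mid \sF^X) = e^{-\int_0^t \Lambda_s ds}$, exactly the computation done for $H_\Lambda$ in Section~2.

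The main obstacle is the passage from the \emph{local} supermartingale inequality to the inequality at $t=\infty$: $M$ is not uniformly integrable a priori, so each of the three pieces must be treated separately, using monotone convergence for the favourably signed gain term and Fatou/dominated convergence for the boundary and cost terms, backed by the linear growth of $G$ and the standing integrability hypothesis. Everything else — the It\^o computation, the conjugacy inequality, and the intensity identity — is routine once the regularity of $G$ and the sign of each term are pinned down.
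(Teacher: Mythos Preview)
Your proposal is correct and follows essentially the same route as the paper: define the process combining $e^{-\int_0^t(\beta+\Lambda_s)ds}G(X_t)$ with the running gain/cost integral, apply It\^o and the conjugacy inequality $C(\lambda)-\lambda z\ge\tilde C(z)$ to see non-positive drift, and pass to $t=\infty$ using linear growth of $G$ and integrability of the cost. The paper differs only in packaging --- it works with the \emph{equality} obtained by replacing $C(\Lambda_s)$ with $F_s=(g-G)_+\Lambda_s+\tilde C((g-G)_+)$ before invoking $F_s\le C(\Lambda_s)$ (this equality is reused in Theorem~\ref{thm:H=G1} to identify the optimiser), asserts directly that the stochastic integral is a true martingale rather than localising, and handles the limit via a single dominated-convergence step using the uniform bound $\int_0^t e^{-\int_0^s\Lambda_u du}\Lambda_s\,ds\le 1$ together with $X^{\beta,*}:=\sup_u e^{-\beta u}X_u\in L^1$.
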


\begin{proof}
Since $g$ and $G$ are of linear growth we may assume $G^0(x) \leq \kappa_0 + \kappa_1 x$ for some constants $\kappa_i \in (0,\infty)$.

Let $Z_t = e^{- \beta t - \int_0^t \Lambda_s ds}G(X_t) - \int_0^t e^{-\beta s  - \int_0^s \Lambda_u du} F_s ds$ where
\[ F_s = F(g(X_s), G(X_s), \Lambda_s) := (g(X_s) -G(X_s))_+ \Lambda_s + \tilde{C}((g(X_s) -G(X_s))_+) \leq C(\Lambda_s). \]
Then, using the definition of $G$
\begin{eqnarray*}
dZ_t & = & e^{- \beta t - \int_0^t \Lambda_s ds} \left\{ -(\beta + \Lambda_t) G + \sL^X G - (g-G)_+ \Lambda_t - \tilde{C}((g-G)_+) \right\} dt + dN_t \\
& = & e^{- \beta t - \int_0^t \Lambda_s ds} \left\{ - \Lambda_t [G + (g-G)_+] \right\} dt + dN_t \\
& = & - e^{- \beta t - \int_0^t \Lambda_s ds}  \Lambda_t G^0(X_t) dt + dN_t
\end{eqnarray*}
where $N_t = \int_0^t e^{- \beta s - \int_0^s \Lambda_u du} \sigma X_s G'(X_s)dW_s$. Our hypotheses on $G$ allow us to conclude that $N=(N_t)_{t \geq 0}$ is a martingale.

It follows that $Z_0 = \E[Z_t + \int_0^t e^{- \beta s - \int_0^s \Lambda_u du} \Lambda_s G^0(X_s) ds]$ or equivalently
\begin{eqnarray} G(x) & = & \E^x \left[ e^{- \beta t - \int_0^t \Lambda_s ds} G(X_t) + \int_0^t e^{- \beta s - \int_0^s \Lambda_u du} \left( \Lambda_s(g(X_s) \vee G(X_s)) - F_s \right) ds \right] \nonumber \\
& \geq & \E^x \left[ e^{- \beta t - \int_0^t \Lambda_s ds} G(X_t)+  \int_0^t e^{- \beta s - \int_0^s \Lambda_u du} \left( \Lambda_s G^0(X_s) - C( \Lambda_s) \right) ds \right].
\label{eq:Ht}
\end{eqnarray}
Since $X$ is geometric Brownian motion and $\beta > \mu$ we have that $X^{\beta,*} := \sup_{u \geq 0} \{ e^{- \beta u} X_u \}$ is in $L^1$. Then
\begin{eqnarray*}
e^{- \beta t - \int_0^t \Lambda_s ds} G(X_t) &\leq&  \kappa_0 + \kappa_1 X^{\beta,*}, \\
 \int_0^t e^{- \beta s - \int_0^s \Lambda_u du} \Lambda_s G^0(X_s) ds & \leq & (\kappa_0 + \kappa_1 X^{\beta,*}) \int_0^t \Lambda_s  e^{- \int_0^s \Lambda_u du} ds \leq\kappa_0 + \kappa_1 X^{\beta,*}, \\
\int_0^t e^{- \beta s - \int_0^s \Lambda_u du} C(\Lambda_s) ds &\leq&  \int_0^\infty e^{- \beta s - \int_0^s \Lambda_u du} C(\Lambda_s) ds,
\end{eqnarray*}
and, since $(T^\Lambda_1, \Lambda)$ is integrable by hypothesis,
\[ \E \left[ \int_0^\infty e^{- \beta s - \int_0^s \Lambda_u du} C(\Lambda_s) ds \right] = \E \left[ \int_0^{T^\Lambda_1} e^{- \beta s} C(\Lambda_s) ds \right] < \infty. \]
Then Dominated Convergence, together with the fact that $e^{-\beta t}X_t \rightarrow 0$ gives \eqref{eq:Hleq}.

\end{proof}

\begin{lem}
Let $(\tau, \Lambda)$ be an integrable strategy. 
Define $Y = (Y_n)_{n \geq 0}$ by
\[ Y_n = e^{-\beta (T^\Lambda_n \wedge \tau)} G^0(X_{T^\Lambda_n \wedge \tau}) I_{ \{ T^\Lambda_n \wedge \tau < \infty \}} - \int_0^{T^\Lambda_n \wedge \tau} e^{- \beta s} C(\Lambda_s) ds \]
where $T^\Lambda_0=0$. Define $\sG_n = \sF_{T^\Lambda_n}$ and set $\G = (\sG_n)_{n \geq 0}$.

Then $Y$ is a uniformly integrable $(\sG_n)_{n \geq 0}$-supermartingale.
\label{lem:Ysupermg}
\end{lem}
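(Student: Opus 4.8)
The plan is to verify the three defining properties in turn: that each $Y_n$ is $\sG_n$-measurable and integrable, that $(Y_n)$ satisfies the one-step supermartingale inequality $\E[Y_{n+1}\mid\sG_n]\le Y_n$, and that the family is uniformly integrable. Adaptedness is immediate since $T^\Lambda_n\wedge\tau\le T^\Lambda_n$, so every ingredient of $Y_n$ is $\sF_{T^\Lambda_n\wedge\tau}\subseteq\sG_n$-measurable. Integrability and uniform integrability I would obtain from a single domination bound, independent of $n$. Using $G^0=G\vee g\ge g\ge 0$ together with the linear growth bound $G^0(x)\le \kappa_0+\kappa_1 x$, the payoff part satisfies $0\le e^{-\beta(T^\Lambda_n\wedge\tau)}G^0(X_{T^\Lambda_n\wedge\tau})I_{\{\cdot<\infty\}}\le \kappa_0+\kappa_1 X^{\beta,*}$, which lies in $L^1$ as recorded in the proof of Lemma~\ref{lem:Gcomparison}. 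Since $C$ is increasing, $C(\Lambda_s)\ge C(\ul)$, so the cost part is controlled by $\int_0^{T^\Lambda_n\wedge\tau}e^{-\beta s}|C(\Lambda_s)|\,ds\le \tfrac{2|C(\ul)|}{\beta}+\int_0^\tau e^{-\beta s}C(\Lambda_s)\,ds$, and the right-hand side is in $L^1$ because $(\tau,\Lambda)$ is integrable. Hence $|Y_n|\le W$ for a fixed $W\in L^1$, which gives both $Y_n\in L^1$ and uniform integrability at once.

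The substance is the supermartingale inequality. I would split on the $\sG_n$-measurable event $\{\tau\le T^\Lambda_n\}$ and its complement $\{\tau>T^\Lambda_n\}$, using that $\{T^\Lambda_n<\tau\}\in\sF_{T^\Lambda_n}=\sG_n$ for the two stopping times $T^\Lambda_n$ and $\tau$. On $\{\tau\le T^\Lambda_n\}$, since $\tau$ is $T^\Lambda$-valued we have $T^\Lambda_{n+1}\wedge\tau=T^\Lambda_n\wedge\tau=\tau$, so $Y_{n+1}=Y_n$ and there is nothing to prove. On $\{\tau>T^\Lambda_n\}$ we have $\tau\ge T^\Lambda_{n+1}$, hence $T^\Lambda_{n+1}\wedge\tau=T^\Lambda_{n+1}$ and $T^\Lambda_n\wedge\tau=T^\Lambda_n<\infty$; factoring out the $\sG_n$-measurable quantity $e^{-\beta T^\Lambda_n}$ reduces the increment to
\[ Y_{n+1}-Y_n=e^{-\beta T^\Lambda_n}\Big[e^{-\beta(T^\Lambda_{n+1}-T^\Lambda_n)}G^0(X_{T^\Lambda_{n+1}})I_{\{T^\Lambda_{n+1}<\infty\}}-G^0(X_{T^\Lambda_n})-\int_{T^\Lambda_n}^{T^\Lambda_{n+1}}e^{-\beta(s-T^\Lambda_n)}C(\Lambda_s)\,ds\Big]. \]

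Next I would invoke the strong Markov property at $T^\Lambda_n$ to evaluate $\E[\,\cdot\mid\sG_n]$ on $\{\tau>T^\Lambda_n\}$. Writing the shifted data $\tilde X_u=X_{T^\Lambda_n+u}$, $\tilde\Lambda_u=\Lambda_{T^\Lambda_n+u}$ and letting $\tilde T_1=T^\Lambda_{n+1}-T^\Lambda_n$ be the first event time of the Poisson clock restarted at $T^\Lambda_n$, the conditional expectation of the bracket equals $\Phi(X_{T^\Lambda_n})$, where $\Phi(y)=\E^{y}\big[e^{-\beta T^{\Lambda'}_1}G^0(X_{T^{\Lambda'}_1})I_{\{T^{\Lambda'}_1<\infty\}}-\int_0^{T^{\Lambda'}_1}e^{-\beta u}C(\Lambda'_u)\,du\big]$ is exactly the right-hand side of \eqref{eq:Hleq} for the shifted rate process $\Lambda'$. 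Lemma~\ref{lem:Gcomparison} then gives $\Phi(y)\le G(y)$, so the bracket has conditional expectation at most $G(X_{T^\Lambda_n})-G^0(X_{T^\Lambda_n})\le 0$, because $G\le G\vee g=G^0$. Combining the two cases yields $\E[Y_{n+1}\mid\sG_n]\le Y_n$.

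The step I expect to be the main obstacle is this strong Markov reduction: one must argue that, conditionally on $\sG_n$ and on $\{\tau>T^\Lambda_n\}$, the triple consisting of the post-$T^\Lambda_n$ diffusion, the shifted rate process, and the next event time $T^\Lambda_{n+1}$ genuinely regenerates as a fresh admissible, integrable strategy started from $X_{T^\Lambda_n}$, so that \eqref{eq:Hleq} applies verbatim. This rests on the regenerative (memoryless, given the rate) structure of the inhomogeneous Poisson process together with the strong Markov property of $X$, and on checking that the shifted strategy inherits integrability from $(\tau,\Lambda)$; once this is in place, the remaining algebra and the appeal to Lemma~\ref{lem:Gcomparison} are routine.
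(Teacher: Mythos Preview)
Your proposal is correct and follows essentially the same route as the paper: a uniform $L^1$ domination for integrability and uniform integrability, a split on whether $\tau$ has already occurred by time $T^\Lambda_n$, and on the nontrivial branch an appeal to Lemma~\ref{lem:Gcomparison} via the Markov/regeneration structure at $T^\Lambda_n$, followed by $G\le G^0$. The paper's argument is terser---it simply records $|Y_n|\le \kappa_0+\kappa_1 X^{\beta,*}+\int_0^\tau e^{-\beta s}C(\Lambda_s)\,ds$ and writes the conditional-expectation step in three lines---but the ideas are identical.
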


\begin{proof}
We have
\[ |Y_n| \leq \kappa_0 + \kappa_1 X^{\beta,*} + \int_0^\tau e^{-\beta s} C(\Lambda_s) ds \in L^1. \]
Moreover, on $T^\Lambda_{n-1}<\infty$ and $\tau > T^\Lambda_{n-1}$, writing $\tilde{T}$ as shorthand for $T^\Lambda_n-T^\Lambda_{n-1}$ and using $\tau \geq T^\Lambda_n$ and Lemma~\ref{lem:Gcomparison} for the crucial first inequality,
\begin{eqnarray*}
\E[Y_n | \sG_{n-1}] & = & e^{- \beta T^\Lambda_{n-1}} \E \left[ \left. e^{- \beta \tilde{T}} G^0(X_{T^\Lambda_n}) I_{ \{ T^\Lambda_n < \infty \}} - \int_{T^\Lambda_{n-1}}^{T^\Lambda_n} e^{- \beta s} C(\Lambda_s) ds \right| \sG_{n-1} \right] - \int_0^{T^\Lambda_{n-1}} e^{- \beta s} C(\Lambda_s) ds \\
& \leq & e^{- \beta T^\Lambda_{n-1}} G(X_{T^\Lambda_{n-1}}) - \int_0^{T^\Lambda_{n-1}} e^{- \beta s} C(\Lambda_s) ds \\
& \leq & e^{- \beta T^\Lambda_{n-1}} G^0(X_{T^\Lambda_{n-1}}) - \int_0^{T^\Lambda_{n-1}} e^{- \beta s} C(\Lambda_s) ds = Y_{n-1}.
\end{eqnarray*}
\end{proof}

\begin{prop}
\label{prop:HleqG}
Let $G$ be an increasing, convex solution to \eqref{eq:Gdef} of at most linear growth. Then $H \leq G$.

\end{prop}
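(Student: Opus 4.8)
The plan is to show that for every integrable admissible strategy $(\tau,\Lambda)$ the objective in \eqref{eq:Hdef} is bounded above by $G(x)$, and then to pass to the supremum. First I would reduce to integrable strategies: if an admissible $\Lambda$ fails to be integrable then $\E^x[\int_0^\tau e^{-\beta s} C(\Lambda_s)\,ds]=\infty$, while $\E^x[e^{-\beta\tau}g(X_\tau)]$ stays finite by linear growth of $g$ together with $\beta>\mu$, so such strategies yield objective $-\infty$ and may be discarded. Hence $H(x)$ equals the supremum of the objective over integrable admissible $(\tau,\Lambda)$ only, and it suffices to bound each of these by $G(x)$.

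So I would fix an integrable admissible $(\tau,\Lambda)$ and work with the sequence $Y=(Y_n)$ of Lemma~\ref{lem:Ysupermg}. The first step is to identify the terminal value $Y_\infty:=\lim_n Y_n$. Since $\tau\in\sT(T_\Lambda)$ is $T_\Lambda$-valued, on $\{\tau<\infty\}$ we have $T^\Lambda_n\wedge\tau=\tau$ for all large $n$, whereas on $\{\tau=\infty\}$ we have $T^\Lambda_n\wedge\tau=T^\Lambda_n\uparrow\infty$; using the linear growth of $G^0$ and $e^{-\beta t}X_t\to 0$ (valid as $\beta>\mu$) the boundary term $e^{-\beta T^\Lambda_n}G^0(X_{T^\Lambda_n})\to 0$ on the latter event, where also $e^{-\beta\tau}g(X_\tau)=0$ by convention. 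In all cases therefore
\[ Y_\infty = e^{-\beta\tau}G^0(X_\tau)\,I_{\{\tau<\infty\}} - \int_0^\tau e^{-\beta s}C(\Lambda_s)\,ds. \]
Because $G^0=G\vee g\ge g$, the integrand of the objective is dominated by $Y_\infty$, so
\[ \E^x\!\left[e^{-\beta\tau}g(X_\tau) - \int_0^\tau e^{-\beta s}C(\Lambda_s)\,ds\right] \le \E^x[Y_\infty]. \]

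The second step exploits the supermartingale structure and the fact that the agent cannot stop at time $0$. Lemma~\ref{lem:Ysupermg} gives that $Y$ is a uniformly integrable $\G$-supermartingale, so $Y_n\to Y_\infty$ in $L^1$ and the expectations $\E^x[Y_n]$ are non-increasing in $n$; consequently $\E^x[Y_\infty]=\lim_n\E^x[Y_n]\le\E^x[Y_1]$. It remains to compute $Y_1$. Since $\tau\ge T^\Lambda_1$ we have $T^\Lambda_1\wedge\tau=T^\Lambda_1$, whence
\[ Y_1 = e^{-\beta T^\Lambda_1}G^0(X_{T^\Lambda_1})\,I_{\{T^\Lambda_1<\infty\}} - \int_0^{T^\Lambda_1}e^{-\beta s}C(\Lambda_s)\,ds, \]
which is exactly the right-hand side of \eqref{eq:Hleq}; Lemma~\ref{lem:Gcomparison} then yields $\E^x[Y_1]\le G(x)$. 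Chaining the bounds gives objective $\le \E^x[Y_\infty]\le \E^x[Y_1]\le G(x)$, and taking the supremum over integrable admissible strategies delivers $H\le G$. The conceptual heart is precisely this passage through $Y_1$ rather than $Y_0=G^0(x)$: because stopping is forbidden at time $0$, one application of Lemma~\ref{lem:Gcomparison} converts the value $G^0$ at the first offer into $G$ at the start, which is what separates $H\le G$ from the weaker $H\le G^0$.

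The identifications of $Y_\infty$ and $Y_1$ are routine. The one point requiring genuine care is the limit inequality $\E^x[Y_\infty]\le\E^x[Y_1]$, and here uniform integrability from Lemma~\ref{lem:Ysupermg} is indispensable: without $L^1$ convergence the supermartingale inequality need not survive the limit, and the event $\{\tau=\infty\}$ — where the payoff vanishes yet infinitely many cost increments accrue — could be mishandled. I would therefore single out the verification that $Y_n\to Y_\infty$ in $L^1$ as the crux, since from it $\E^x[Y_\infty]=\lim_n\E^x[Y_n]\le\E^x[Y_1]$ follows at once from monotonicity of the expectations.
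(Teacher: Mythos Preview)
Your proof is correct and follows essentially the same route as the paper: fix an integrable strategy, use Lemma~\ref{lem:Gcomparison} to get $\E^x[Y_1]\le G(x)$, use the uniformly integrable supermartingale property of Lemma~\ref{lem:Ysupermg} to pass from $Y_1$ to $Y_\infty$, and then use $G^0\ge g$ to bound the objective by $\E^x[Y_\infty]$ before taking the supremum. Your write-up is more explicit than the paper's about the reduction to integrable strategies and the identification of $Y_\infty$ and $Y_1$, but the structure and the two lemmas invoked are identical.
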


\begin{proof} Let $(\tau, \Lambda)$ be any integrable strategy.

From Lemma~\ref{lem:Gcomparison} we have
\[ \E[Y_1] = \E \left[ e^{-\beta T^\Lambda_1} G^0(X_{T^\Lambda_1}) I_{\{ T^\Lambda_1 < \infty \} } - \int_0^{T^\Lambda_1} e^{-\beta s} C(\Lambda_s) ds \right] \leq G(x). \]

Moreover, since $Y$ is a uniformly integrable supermartingale,
\begin{eqnarray*} \E^x[Y_1] \geq \E^x[Y_\infty] & = & \E \left[ e^{-\beta \tau} G^0(X_\tau) I_{\{ \tau < \infty \} } - \int_0^\tau e^{-\beta s} C(\Lambda_s) ds \right] \\
& \geq & \E \left[ e^{-\beta \tau} g(X_\tau) I_{\{ \tau < \infty \} } - \int_0^\tau e^{-\beta s} C(\Lambda_s) ds \right].
\end{eqnarray*}
Taking a supremum over stopping times and rate processes we conclude that $H(x) \leq G(x)$.
\end{proof}

Our goal now is to show that $H=G$. We prove this result, first in the simplest case where the set of admissible rate processes is unrestricted (i.e. $\Lambda_t$ takes values in $I=[0,\infty)$ and the cost function $C$ is lower semi-continuous and convex, with $\lim_{\lambda \uparrow \infty} C(\lambda)/\lambda = \infty$). Then we argue that the same result holds true under weaker assumptions. Note that we allow for $\{ \lambda \in I : C(\lambda)=\infty \}$ to be non-empty, but our assumption that $C$ is lower semi-continuous means that if $\check{\lambda} = \inf \{ \lambda : C(\lambda)= \infty \}$ then $C(\check{\lambda}) = \lim_{\lambda \uparrow \check{\lambda}}C(\lambda)$.

\begin{thm}
Suppose $I = [0,\infty)$ and $C: I \mapsto [0,\infty]$ is increasing, convex and lower semi-continuous with $\lim_{\lambda \uparrow \infty} C(\lambda)/\lambda = \infty$.
Let $G$ be an increasing, convex solution to \eqref{eq:Gdef}
of at most linear growth. Then $H=G$.
\label{thm:H=G1}
\end{thm}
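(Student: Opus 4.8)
The plan is to prove the reverse inequality $H \geq G$; combined with Proposition~\ref{prop:HleqG} this yields $H = G$. The strategy I would follow is to exhibit a single integrable admissible pair $(\tau^*, \Lambda^*)$ for which the inequality in Lemma~\ref{lem:Gcomparison} becomes an equality and whose value recovers $G(x)$ exactly. Define the feedback rate $\Lambda^*_t := \Theta((g(X_t) - G(X_t))_+)$, where $\Theta(z) = \mbox{arginf}_\lambda \{ C(\lambda) - \lambda z \}$, and let $\tau^* := T^{\Lambda^*}_1$ be the first event time of the Poisson process run at this rate. The three hypotheses on $C$ (convex, lower semi-continuous, and $\lim_{\lambda \uparrow \infty} C(\lambda)/\lambda = \infty$) are exactly what guarantee that the infimum defining $\tilde C$ is attained at a finite value, so that $\Theta$ is a well-defined measurable selection and, crucially, that the defining quantity $F_s = (g-G)_+\Lambda^*_s + \tilde{C}((g-G)_+)$ equals $C(\Lambda^*_s)$ rather than merely satisfying $F_s \leq C(\Lambda^*_s)$.

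Two structural features make this candidate work. First, since $C$ is increasing we have $\Theta(0) = 0$, so $\Lambda^* \equiv 0$ on the continuation region $\sC := \{x : g(x) < G(x)\}$. Consequently the compensator $\int_0^t \Lambda^*_s\,ds$ increases only while $X$ lies in the selling region $\sS := \{x : g(x) \geq G(x)\}$, so that at the first event time $X_{\tau^*}$ lies in $\sS$ almost surely; there $G^0 = g \vee G = g$. Second, on $\{T^{\Lambda^*}_1 = \infty\}$ I would take $\tau^* = \infty$ and the payoff term vanishes through the indicator, consistent with $e^{-\beta t}g(X_t) \to 0$.

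Granting integrability of $(\tau^*, \Lambda^*)$, the identity $F_s = C(\Lambda^*_s)$ turns the single inequality in the derivation of \eqref{eq:Hleq} into an equality, giving
\[ G(x) = \E^x\left[ e^{-\beta \tau^*} G^0(X_{\tau^*}) I_{\{\tau^* < \infty\}} - \int_0^{\tau^*} e^{-\beta s} C(\Lambda^*_s)\,ds \right] = \E^x\left[ e^{-\beta \tau^*} g(X_{\tau^*}) I_{\{\tau^* < \infty\}} - \int_0^{\tau^*} e^{-\beta s} C(\Lambda^*_s)\,ds \right], \]
where the second equality uses $G^0(X_{\tau^*}) = g(X_{\tau^*})$. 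The right-hand side is the payoff of an admissible strategy and is therefore bounded above by $H(x)$, so $G(x) \leq H(x)$, as required.

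The main obstacle is verifying that $(\tau^*, \Lambda^*)$ is integrable, i.e. $\E^x[\int_0^{\tau^*} e^{-\beta s} C(\Lambda^*_s)\,ds] < \infty$, since only then does Lemma~\ref{lem:Gcomparison} apply and does the limiting argument as $t \to \infty$ (which needs $\E^x[e^{-\beta t - \int_0^t \Lambda^*_s ds}G(X_t)] \to 0$) go through. Writing the cost as $\E^x[\int_0^\infty e^{-\beta s - \int_0^s \Lambda^*_u du} C(\Lambda^*_s)\,ds]$, I would control the growth of $C(\Lambda^*(x))$ as $x \to \infty$ against both the discount factor $e^{-\beta s}$ and the survival factor $e^{-\int_0^s \Lambda^*_u du}$: the linear growth of $g$ and $G$ bounds $(g-G)_+$, hence $\Lambda^*$ and $C(\Lambda^*)$, at a controlled rate, while the $L^1$ bound on $X^{\beta,*}$ already exploited in Lemma~\ref{lem:Gcomparison} supplies the domination needed for dominated convergence. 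A secondary point deserving care is the adaptedness of the feedback control $\Lambda^*$ and the legitimacy of constructing the associated Cox process at this state-dependent rate; these are standard but should be recorded.
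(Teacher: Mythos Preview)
Your proposal follows essentially the same route as the paper. The paper defines the feedback rate as $\hat{\Lambda}_s = D((g(X_s)-G(X_s))_+)$ where $D$ is the left-continuous inverse of the right-derivative $C'$; this is your $\Theta$ under another name. It then observes, exactly as you do, that with this choice $F_s = C(\hat{\Lambda}_s)$ so that equality holds throughout the derivation of \eqref{eq:Hleq}, and that $\hat{\Lambda}_s = 0$ on $\{g \leq G\}$ forces $X_{T^{\hat\Lambda}_1} \in \{g > G\}$ almost surely, whence $G^0(X_{T^{\hat\Lambda}_1}) = g(X_{T^{\hat\Lambda}_1})$ and $G(x) \leq H(x)$.

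The one point on which you are more careful than the paper is integrability of $(\tau^*,\Lambda^*)$, which the paper does not pause over. Your proposed direct growth control of $C(\Lambda^*(x))$ would work, but a cleaner route is available: the \emph{equality} version of \eqref{eq:Ht} already holds for every finite $t$ (it comes from It\^o and the martingale property of $N$, not from any integrability hypothesis), and rearranging gives
\[
\E^x\!\left[\int_0^t e^{-\beta s - \int_0^s \Lambda^*_u\,du} C(\Lambda^*_s)\,ds\right]
= \E^x\!\left[e^{-\beta t - \int_0^t \Lambda^*_u\,du} G(X_t)\right]
+ \E^x\!\left[\int_0^t e^{-\beta s - \int_0^s \Lambda^*_u\,du}\Lambda^*_s G^0(X_s)\,ds\right] - G(x),
\]
whose right-hand side is bounded by $2(\kappa_0 + \kappa_1 \E^x[X^{\beta,*}]) - G(x)$ uniformly in $t$. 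Monotone convergence then yields the required finite cost at $t=\infty$.
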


\begin{proof}
Let $C'$ denote the right-derivative of $C$ and set $C'=\infty$ on $\{ \lambda : C(\lambda)=\infty \}$.
Since $C'$ is increasing it has a left-continuous inverse $D : \R_+ \mapsto \R_+$. In particular, $D(y) = \sup \{ \lambda \in [0,\infty): C'(\lambda)  < y \}$ with the convention that $D(y)=0$ if $C'(\lambda) \geq y$ on $(0,\infty)$. We note that our hypotheses mean that $D$ is well defined and finite on $(0,\infty)$ and we set $D(0)=0$.

Let $\hat{\Lambda} = (\hat{\Lambda}_s)_{s \geq 0}$ be given by $\hat{\Lambda}_s = D( (g(X_s) - G(X_s))_+)$. We will show that $\hat{\Lambda}$ is the optimal rate process.

Note first that there is equality in \eqref{eq:Ht}, and therefore in \eqref{eq:Hleq}, provided $F_s = F(g(X_s), G(X_s), \Lambda_s) = (g(X_s) -G(X_s))_+ \Lambda_s + \tilde{C}((g(X_s) -G(X_s))_+) = C(\Lambda_s)$. This is satisfied if $\Lambda_s = \hat{\Lambda}_s$. 

Let $\sX_> = \{ x : g(x) > G(x) \}$ and let $\sX_{\leq} = \{ x : g(x) \leq G(x) \}$.
Then, under the hypothesis of the theorem, whilst $X_\cdot \in \sX_\leq$ we have that $\hat{\Lambda}_\cdot \equiv 0$. Hence (almost surely) $X_{T^{\hat{\Lambda}}_1} \in \sX_>$ and $G^0(X_{T^{\hat{\Lambda}}_1}) = g(X_{T^{\hat{\Lambda}}_1})$. Then, taking $T = T^{\hat{\Lambda}}_1$ we have from \eqref{eq:Hleq} that
\begin{eqnarray*}
 G(x) & = &\E \left[ e^{-\beta T} G^0(X_{T}) I_{\{ T < \infty \} } - \int_0^{T} e^{-\beta s} C(\Lambda_s) ds \right] \\
& = &\E \left[ e^{-\beta T} g(X_{T}) I_{\{ T < \infty \} } - \int_0^{T} e^{-\beta s} C(\Lambda_s) ds \right] \leq H(x)
\end{eqnarray*}
and hence, combining with Proposition~\ref{prop:HleqG}, $G=H$.
\end{proof}

\begin{cor}
$\Lambda^* = (\Lambda_s^*)_{s \geq 0}$ given by $\Lambda^*_s = D((g(X_s) - G(X_s))_+)$ is an optimal strategy, and $\tau^* = T^{\Lambda^*}_1$ is an optimal stopping rule.
\label{cor:Lambda*}
\end{cor}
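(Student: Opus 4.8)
The strategy named here is exactly the pair $(\hat\Lambda, T^{\hat\Lambda}_1)$ built in the proof of Theorem~\ref{thm:H=G1}, since $\Lambda^*_s = D((g(X_s)-G(X_s))_+) = \hat\Lambda_s$ and $\tau^* = T^{\Lambda^*}_1$. The plan is therefore to check that this pair is a feasible (admissible and integrable) strategy and that it attains the value $H(x) = G(x)$; optimality is then immediate, because $H$ is by definition the supremum of the objective over precisely this class of strategies.

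First I would confirm admissibility. Since $D$ is left-continuous, hence Borel, $x \mapsto (g(x)-G(x))_+$ is continuous, and $X$ is adapted, the process $\Lambda^*$ is adapted and $[0,\infty)$-valued, so $\Lambda^* \in \sA$; and $\tau^* = T^{\Lambda^*}_1 \le T^{\Lambda^*}_\infty$ is a $T^{\Lambda^*}$-valued stopping time, so $\tau^* \in \sT(T^{\Lambda^*})$. Next I would verify integrability, $\E^x[\int_0^{\tau^*} e^{-\beta s} C(\Lambda^*_s)\,ds] < \infty$. This is the one point requiring genuine work, and it is exactly the hypothesis needed to legitimately apply Lemma~\ref{lem:Gcomparison} to $(\tau^*, \Lambda^*)$ inside the theorem's proof. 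While $X \in \sX_{\leq}$ the rate $\Lambda^*$ vanishes and the integrand equals the constant $C(0)$; while $X \in \sX_{>}$ the cost $C(\Lambda^*)$ may grow with $x$, but finiteness of the expectation follows from the linear growth of $g$ and $G$, the integrability of $X^{\beta,*}$ and exponential discounting, in the same manner as the domination bounds in Lemma~\ref{lem:Gcomparison}.

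With feasibility in hand, I would read the conclusion off the theorem's proof. Taking $\Lambda = \Lambda^*$ forces $F_s = C(\Lambda^*_s)$, so \eqref{eq:Ht}, and hence \eqref{eq:Hleq}, hold with equality. Moreover $\Lambda^* \equiv 0$ while $X \in \sX_{\leq}$, so no Poisson event can fire there; hence $X_{\tau^*} \in \sX_{>}$ almost surely and $G^0(X_{\tau^*}) = g(X_{\tau^*})$, which also records that stopping at the first offer is optimal (there $g > G = H$). Putting $T = \tau^*$ in the equality form of \eqref{eq:Hleq} then yields
\[ \E^x \left[ e^{-\beta \tau^*} g(X_{\tau^*}) I_{\{ \tau^* < \infty \}} - \int_0^{\tau^*} e^{-\beta s} C(\Lambda^*_s)\,ds \right] = G(x) = H(x). \]

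The left-hand side is the objective of \eqref{eq:Hdef} evaluated at the feasible strategy $(\tau^*, \Lambda^*)$, and the right-hand side is its supremum; therefore $(\tau^*, \Lambda^*)$ attains the supremum and is optimal. The hard part will be the integrability estimate in the second step; everything else is a direct transcription of the equality already established for Theorem~\ref{thm:H=G1}.
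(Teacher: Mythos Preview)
Your proposal is correct and follows the paper's approach: the paper gives no separate proof for this corollary, treating it as an immediate byproduct of the argument for Theorem~\ref{thm:H=G1}, where the pair $(\hat\Lambda, T^{\hat\Lambda}_1)$ is shown to achieve equality in \eqref{eq:Hleq} and hence to attain $G(x)=H(x)$. You are in fact more explicit than the paper about the integrability verification for $(\tau^*,\Lambda^*)$, which the paper leaves implicit when asserting that equality in \eqref{eq:Ht} passes to equality in \eqref{eq:Hleq}.
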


Our goal now is to extend Theorem~\ref{thm:H=G1} to allow for more general admissibility sets and cost functions.

Let $c$ be a generic increasing, convex function $c : [0,\infty) \mapsto [0,\infty]$. If $c$ takes the value $+\infty$ on $(\check{\lambda},\infty)$ then we assume that $c(\check{\lambda}) = \lim_{\lambda \uparrow \infty} c(\lambda) = c(\check{\lambda})$, and set the right-derivative $c'$ equal to infinity on $(\check{\lambda},\infty)$ also. For such a $c$ define $D_c:[0,\infty) \mapsto [0,\infty]$ by $D_c(y) = \sup \{ \lambda \in (0,\infty): c'(y) < y \}$ again with the conventions that $D_c(y)=0$ if $c'(\lambda) \geq y$ on $(0,\infty)$ and $D_c(0)=0$. Note that $D_c(y) \leq \sup \{ y : c(y) < \infty \}$.

Let $I$ with endpoints $\{\underline{\lambda}, \overline{\lambda} \}$ be a subinterval of $[0,\infty)$ with the property that $I$ is closed on the left and closed on the right if $\overline{\lambda} < \infty$.

Let $\gamma : I \mapsto \R_+$ be an increasing function. Let $\breve{\gamma}$ be the largest convex minorant of $\gamma$ on $I$. The
define $\gamma^\dagger$ by $\gamma^\dagger(\lambda) = \gamma(\underline{\lambda})$ on $[0,\underline{\lambda})$ (if this interval is non-empty), $\gamma^\dagger(\lambda) = \breve{\gamma}(\lambda)$ on $[\underline{\lambda}, \overline{\lambda}]$ and $\gamma^{\dagger} = \infty$ on $(\overline{\lambda},\infty)$.
By construction $\gamma^\dagger:[0,\infty) \mapsto [0,\infty]$ is convex and we can define $D_{\gamma^\dagger}$.

Suppose that $C: I \mapsto \R_+$ is our increasing, lower semi-continuous cost function. Introduce $C^\dagger : \R_+ \mapsto [0,\infty]$ and $D_{C^\dagger}$ which we abbreviate to $D^\dagger$. Note that if $D^\dagger(z) < \ul$ then $z=0$, $D^\dagger(z)=0$ and $C^\dagger(0)=C^\dagger(\ul)=C(\ul)$. Summarising the important results we have:


\begin{lem}
$\tilde{C} = \widetilde{C^{\dagger}}$. Moreover, for $z \in [0,\infty)$,
$C( (D^{\dagger}(z) \vee \underline{\lambda}) \wedge \overline{\lambda}) = C^\dagger(D^{\dagger}(z))$.
\label{lem:CCdagger}
\end{lem}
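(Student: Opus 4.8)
The plan is to prove the two assertions separately: the conjugate identity $\tilde{C} = \widetilde{C^\dagger}$ follows from the classical insensitivity of Legendre--Fenchel conjugation to passing to the convex envelope, while the pointwise identity rests on a contact-point property of the convex minorant. Throughout I write $\breve{C}$ for the largest convex minorant of $C$ on $I$, and I use the three structural facts available here: $C^\dagger = C(\ul)$ on $[0,\ul)$, $C^\dagger = \breve{C}$ on $I$, and $C^\dagger = \infty$ on $(\ol,\infty)$; that $C$ is increasing and lower semi-continuous; and that $\tilde{C}(z) = \inf_{\lambda \in I}\{C(\lambda) - \lambda z\}$ (equivalently, the infimum over $[0,\infty)$ of $C$ extended by $+\infty$ off $I$).

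For the first claim the key step is $\tilde{C} = \widetilde{\breve{C}}$, where $\widetilde{\breve{C}}(z) = \inf_{\lambda \in I}\{\breve{C}(\lambda) - \lambda z\}$. The inequality $\widetilde{\breve{C}} \le \tilde{C}$ is immediate from $\breve{C} \le C$. For the reverse I would fix $z \ge 0$ and note that the affine map $\lambda \mapsto \lambda z + \tilde{C}(z)$ is a minorant of $C$ on $I$ by definition of the infimum, hence lies below the largest convex minorant; rearranging gives $\breve{C}(\lambda) - \lambda z \ge \tilde{C}(z)$ for all $\lambda \in I$, i.e. $\widetilde{\breve{C}}(z) \ge \tilde{C}(z)$. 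It then remains to check that the two boundary pieces of $C^\dagger$ do not lower the infimum defining $\widetilde{C^\dagger}$: the $+\infty$-piece on $(\ol,\infty)$ never contributes, and since $z \ge 0$ the flat piece contributes $\inf_{\lambda<\ul}\{C(\ul)-\lambda z\} = C(\ul) - \ul z = \breve{C}(\ul) - \ul z$, a value already attained at $\lambda=\ul$ inside $\widetilde{\breve{C}}$. Here I use $\breve{C}(\ul)=C(\ul)$, which holds because the constant $C(\ul)$ is an affine minorant of the increasing function $C$. Combining gives $\widetilde{C^\dagger} = \widetilde{\breve{C}} = \tilde{C}$.

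For the second claim I would first clamp $D^\dagger$. Since $(C^\dagger)' = 0$ on $(0,\ul)$ and $(C^\dagger)'=\infty$ on $(\ol,\infty)$, one checks $D^\dagger(z) \in [\ul,\ol]$ for $z>0$ while $D^\dagger(0)=0$; hence $(D^\dagger(z)\vee\ul)\wedge\ol = D^\dagger(z)$ for $z>0$ and $=\ul$ for $z=0$. The case $z=0$ is immediate since both sides equal $C(\ul)$. For $z>0$ it suffices, as $C^\dagger=\breve{C}$ on $I$ and $\lambda^*:=D^\dagger(z)\in I$, to show $\breve{C}$ and $C$ agree at $\lambda^*$. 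This is the heart of the argument. The set $\{\lambda\in I:\breve{C}(\lambda)<C(\lambda)\}$ is open (as $\breve{C}-C$ is upper semi-continuous) and is a disjoint union of open intervals on each of which $\breve{C}$ is affine, while on the complement -- including, by lower semi-continuity, the endpoints of those intervals -- one has $\breve{C}=C$. I would rule out $\lambda^*$ lying in the interior of a bridging interval $(a,b)$ of slope $m$: if $m<z$ then $(C^\dagger)'<z$ throughout $(a,b)$, forcing $D^\dagger(z)\ge b>\lambda^*$; if $m\ge z$ then $(C^\dagger)'\ge z$ on $[a,b)$, forcing $D^\dagger(z)\le a<\lambda^*$; either way $\lambda^*\notin(a,b)$, a contradiction. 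Hence $C(\lambda^*)=\breve{C}(\lambda^*)=C^\dagger(\lambda^*)$, which is the asserted identity.

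The main obstacle is this contact-point step. The delicate points are the bookkeeping forced by the left-continuous-inverse convention in the definition of $D^\dagger$ (so that a bridging piece of slope \emph{exactly} $z$ is correctly assigned to its left endpoint), the appeal to lower semi-continuity to secure $\breve{C}=C$ at the endpoints of the bridging intervals, and the boundary cases for $I$ -- in particular $\ol=\infty$, where finiteness of $D^\dagger(z)$ must be guaranteed from the growth hypothesis before $C(\lambda^*)$ is even meaningful. By contrast, the conjugate identity is routine convex analysis once the minorant structure is in place.
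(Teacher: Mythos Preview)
The paper does not prove this lemma: it is stated without argument as ``summarising the important results'' implicit in the construction of $C^\dagger$ from $C$, with only the observation that $D^\dagger(z)<\ul$ forces $z=0$ recorded in the surrounding text. Your proposal is correct and supplies precisely the routine convex-analytic details the authors leave to the reader --- the conjugate identity via insensitivity of the Legendre transform to passage to the convex minorant, and the contact-point property of $D^\dagger(z)$ via the affine structure of $\breve{C}$ on each component of $\{\breve{C}<C\}$ together with monotonicity of $(C^\dagger)'$. The boundary subtleties you flag (the left-continuous-inverse convention forcing the $m\ge z$ case to land at the left endpoint, the use of lower semi-continuity to secure $\breve{C}=C$ at bridging endpoints, and finiteness of $D^\dagger(z)$ when $\ol=\infty$ under the superlinear-growth hypothesis of the enclosing Theorem) all resolve as you indicate; in particular $\breve{C}(\ul)=C(\ul)$ and, when $\ol<\infty$, $\breve{C}(\ol)=C(\ol)$ follow from monotonicity and left-continuity of $C$, so $\lambda^*=D^\dagger(z)$ cannot sit at a non-contact boundary point.
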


\begin{thm}
Suppose $I \subseteq [0,\infty)$ and let $C: I \mapsto \R$ be increasing, lower semi-continuous and such that $\lim_{\lambda \uparrow \infty} \frac{C(\lambda)}{\lambda} = \infty$. Let $G$ be an increasing, convex solution of \eqref{eq:Gdef} and suppose $G$ is of linear growth. Then $H=G$.
\label{thm:G=H2}
\end{thm}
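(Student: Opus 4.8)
The plan is to keep the upper bound $H \le G$, which is already delivered by Proposition~\ref{prop:HleqG}: its proof, via Lemma~\ref{lem:Gcomparison}, uses only that $G$ is an increasing, convex, linearly-growing solution of \eqref{eq:Gdef} and that $\tilde{C}$ is the concave conjugate of $C$ over $I$ (so that $F_s \le C(\Lambda_s)$ for every $I$-valued $\Lambda$). All the work is therefore in the reverse inequality $H \ge G$, for which I would exhibit a single admissible strategy whose value equals $G(x)$. Relative to Theorem~\ref{thm:H=G1} there are two new features: $C$ need not be convex, and the admissible interval $I$ may have $\ul > 0$ (and possibly $\ol < \infty$). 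Both are absorbed by the convexified cost $C^\dagger$, its left-continuous inverse marginal $D^\dagger = D_{C^\dagger}$, and the two identities of Lemma~\ref{lem:CCdagger}. In particular, since $\tilde{C} = \widetilde{C^\dagger}$, equation \eqref{eq:Gdef} is unchanged when $C$ is replaced by $C^\dagger$, so $G$ is simultaneously a solution for the convexified problem.

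The candidate optimal rate is the $I$-valued process $\Lambda^*_s = (D^\dagger(z_s) \vee \ul) \wedge \ol$ with $z_s = (g(X_s) - G(X_s))_+$. The key computation is that this choice turns the bound $F_s \le C(\Lambda_s)$ into an equality. Indeed, the minimiser defining $\widetilde{C^\dagger}(z_s)$ is $D^\dagger(z_s)$, so $C^\dagger(D^\dagger(z_s)) = z_s D^\dagger(z_s) + \tilde{C}(z_s)$; the second identity of Lemma~\ref{lem:CCdagger} gives $C(\Lambda^*_s) = C^\dagger(D^\dagger(z_s))$; and a short case-check shows $z_s D^\dagger(z_s) = z_s \Lambda^*_s$, since the two rates differ only when $D^\dagger(z_s) < \ul$, which by the remark preceding Lemma~\ref{lem:CCdagger} forces $z_s = 0$. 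Hence $F_s = z_s \Lambda^*_s + \tilde{C}(z_s) = C(\Lambda^*_s)$, so that \eqref{eq:Ht}, and therefore \eqref{eq:Hleq}, holds with equality for $\Lambda = \Lambda^*$.

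With equality in hand I would run the argument of Lemma~\ref{lem:Ysupermg} and Proposition~\ref{prop:HleqG} in reverse, using the stopping rule $\tau^* = \inf\{ T^{\Lambda^*}_n : X_{T^{\Lambda^*}_n} \in \sX_> \}$, the first arrival at which the payoff dominates. On $\{\tau^* > T^{\Lambda^*}_{n-1}\}$ every earlier arrival lies in $\sX_\leq$, where $G = G^0$, so the second inequality in the proof of Lemma~\ref{lem:Ysupermg} is also an equality; combined with the first inequality there (an application of Lemma~\ref{lem:Gcomparison}), which is now an equality for $\Lambda^*$, the process $Y$ becomes a $\G$-martingale from index one. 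Uniform integrability is inherited exactly as before, so $G(x) = \E^x[Y_1] = \E^x[Y_\infty]$. Since $X_{\tau^*} \in \sX_>$ forces $G^0(X_{\tau^*}) = g(X_{\tau^*})$, the terminal value $Y_\infty$ is precisely the objective of $(\tau^*, \Lambda^*)$, whence $G(x) \le H(x)$ and, together with Proposition~\ref{prop:HleqG}, $H = G$.

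The step I expect to be most delicate is integrability: one must check that $(\tau^*, \Lambda^*)$ is an integrable strategy so that Lemmas~\ref{lem:Gcomparison} and \ref{lem:Ysupermg} apply. This is exactly where the hypothesis $\lim_{\lambda \uparrow \infty} C(\lambda)/\lambda = \infty$ enters: superlinearity keeps $D^\dagger(z_s)$, and hence $C(\Lambda^*_s) = z_s \Lambda^*_s + \tilde{C}(z_s)$, controlled by the linear growth of $g$ and $G$, so that $\E^x[\int_0^\infty e^{-\beta s - \int_0^s \Lambda^*_u du} C(\Lambda^*_s)\, ds] < \infty$ by the same $X^{\beta,*} \in L^1$ estimates used in Lemma~\ref{lem:Gcomparison}. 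A secondary point to treat carefully is the regime $\ul > 0$: there the agent is forced to generate offers at rate $\ul$ even on $\sX_\leq$, and one must verify that declining all such offers — which is exactly what $\tau^*$ encodes — is consistent, and that the cost $C(\ul) = C^\dagger(0)$ of doing so is precisely what the convexified problem already pays, so that no value is lost in passing from the rate $0$ of the $C^\dagger$-problem to the enforced rate $\ul$ of the original one.
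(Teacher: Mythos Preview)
Your proposal is correct and rests on the same ingredients as the paper's proof: the convexified cost $C^\dagger$, the identity $\tilde{C}=\widetilde{C^\dagger}$, the candidate rate $\Lambda^*_s=(D^\dagger(z_s)\vee\ul)\wedge\ol$, and the key equality $C(\Lambda^*_s)=C^\dagger(D^\dagger(z_s))$ from Lemma~\ref{lem:CCdagger}. The difference is purely in packaging the lower bound. The paper introduces the auxiliary unrestricted problem with cost $C^\dagger$, calls its value $H^\dagger$, invokes Theorem~\ref{thm:H=G1} as a black box to get $H^\dagger=G$, and then transfers the optimal pair $(\tau^\dagger,\Lambda^\dagger)$ back to the $I$-restricted problem via $\Lambda^*=\ul\vee\Lambda^\dagger$ and $\tau^*=\tau^\dagger$, using Lemma~\ref{lem:CCdagger} to see the costs agree. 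You instead stay inside the original problem throughout, turning the inequality in \eqref{eq:Ht} into an equality for $\Lambda^*$ and then upgrading $Y$ from a supermartingale to a $\G$-martingale (from index one) by observing that on $\{\tau^*>T^{\Lambda^*}_{n-1}\}$ the arrival lies in $\sX_\leq$, so $G^0=G$ there. The paper's route is more modular and sidesteps your martingale check, since $\Lambda^\dagger\equiv 0$ on $\sX_\leq$ forces $\tau^\dagger=T^{\Lambda^\dagger}_1\in\sX_>$ automatically; your route is more self-contained and makes explicit the stopping rule ``first arrival in $\sX_>$'' that the paper only spells out afterwards in Remark~1. Both approaches leave integrability of $(\tau^*,\Lambda^*)$ as the residual technicality you flag, and your sketch for it via $X^{\beta,*}\in L^1$ and superlinearity is the right line.
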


\begin{proof}
Introduce $C^\dagger$, defined from $C$ as above, and let $H^\dagger$ be the solution of the unrestricted problem (ie $I^\dagger = [0,\infty)$) with (convex) cost function $C^\dagger$. Note that since $\tilde{C} = \tilde{C^\dagger}$ we have by Theorem~\ref{thm:H=G1} that $H^\dagger = G$. It remains to show that $H = H^\dagger$.

The inequality $H \leq H^\dagger$ is straight-forward: if $(\tau,\Lambda)$ is admissible for the interval $I$ and integrable for cost function $C$, then it is admissible for the interval $[0,\infty)$ and integrable for cost function $C^\dagger$; moreover $C \geq C^\dagger$, and so $H \leq H^\dagger$.

For the converse, let $\Lambda^\dagger = D^\dagger( (g(X_s) - G(X_s))_+)$ and $\tau^\dagger = T^{\Lambda^\dagger}_1$ be optimal for the problem with cost function $C^\dagger$. Note that $\Lambda^\dagger \leq \overline{\lambda}$ and that
\[ H^\dagger(x) = \E^x \left[ e^{- \beta \tau^\dagger} g(X_{\tau^\dagger}) - \int_0^{\tau^\dagger} e^{-\beta s} C^\dagger( \Lambda^\dagger_s) ds \right] \]

Define $\Lambda^* =\underline{\lambda} \vee \Lambda^\dagger$ and $\tau^* = \tau^\dagger$. Then, by Lemma~\ref{lem:CCdagger},
\[ C(\Lambda^*_s) = C( (D^\dagger( (g(X_s) - G(X_s))_+)\vee \underline{\lambda} )\wedge \overline{\lambda}) = C^\dagger( D^\dagger( (g(X_s) - G(X_s))_+)) = C^\dagger(\Lambda^\dagger_s). \]
Moreover, $\Lambda^* \in [\underline{\lambda},\overline{\lambda}]$ and is admissible for the original problem with admissibility interval $I$. Then
\[ H^\dagger(x) = \E^x \left[ e^{- \beta \tau^*} g(X_{\tau^*}) - \int_0^{\tau^*} e^{-\beta s} C( \Lambda^*_s) ds \right]  \leq H(x) . \]

\end{proof}

\begin{rem}
Note that $\Lambda^* \geq \Lambda^\dagger$ and we may have strict inequality if $\underline{\lambda}>0$. In that case, when $g(X_s) \leq G(X_s)$ we have $\Lambda^\dagger_s = 0$, but $\Lambda_s^* = \underline{\lambda}$. In particular, we may have $\tau^* > T^{\Lambda^*}_1$, and the agent does not sell at the first opportunity. See Section~\ref{ssec:subinterval}.
\end{rem}

\section{Concave cost functions}
In this section we provide a complementary result to Theorem~\ref{thm:H=G1} by considering a concave cost function $C$ (defined on $\sI = [0,\infty)$).

Suppose $C$ is increasing and concave on $[0,\infty)$. Then the greatest convex minorant $\breve{C}$ of $C$ is of the form
\[ \breve{C}(\lambda) = \delta + \epsilon \lambda   \]
for some constants $\delta,\epsilon \in [0,\infty)$. Then, $C$ and $\breve{C}$ have the same concave conjugates given by $\tilde{C}(z) := \inf_{\lambda > 0} \{ C(\lambda) - \lambda z \}$ where $\tilde{C}(z) = \delta$ for $z \leq \epsilon$ and $\tilde{C}(z)= - \infty$ for $z>\epsilon$.

From the heuristics section we expect the value function to solve \eqref{eq:Hheuristic}. Then we might expect that on $g-H < \epsilon$ we have
\begin{equation}
 \sL^X H - \beta H - \delta = 0.
\label{eq:Hleqe}
\end{equation}
On the other hand some care is needed to interpret $\sL^X H - \beta H  = \tilde{C}((g-H)_+)$ on the set $g-H>\epsilon$. In fact, as we argue in the following theorem, $H \geq g-\epsilon$ and on the set $H=g - \epsilon$ \eqref{eq:Hleqe} needs to be modified. We show that
$H = w_{K,\epsilon,\delta}$ where (recall~\eqref{eq:wKJD})
\begin{equation} w_{K, \epsilon, \delta}(x) = \sup_{\tau \in \sT([0,\infty))} \E^x \left[e^{-\beta \tau} \{(X_\tau - K)_+ - \epsilon \} - \delta \int_0^\tau e^{-\beta s} ds \right] . \label{eq:wKJD2} \end{equation}
The intuition is that when $H> g - \epsilon$ it is optimal to wait and to take $\Lambda=0$ at cost $\delta$ per unit time. However, on $H<g - \epsilon$ (and also when $H=g-\epsilon$) it is optimal to take $\Lambda$ as large as possible. Since there is no upper bound on $\Lambda$, this corresponds to taking $\Lambda$ infinite --- such a choice is inadmissible but can be approximated with ever larger finite values. Then, in the region where the agent wants to stop, if the stopping rate is large, say $N$, then the expected time to stop is $N^{-1}$, the cost incurred per unit time is $C(N) \approx \delta + \epsilon N$, and so the expected total cost of stopping is approximately $\frac{\delta + \epsilon N}{N} \approx \epsilon$. Effectively the agent can choose to sell (almost) instantaneously, for a fee or fixed transaction cost of $\epsilon$. This explains why the problem value is the same as the problem value for \eqref{eq:wKJD2}.

\begin{thm}
Let $I=[0,\infty)$ and let $C:I \mapsto \R_+$ be non-negative, increasing and concave. Suppose the greatest convex minorant $\breve{C}$ of $C(\lambda)$ is of the form $\breve{C}(\lambda) = \delta + \epsilon \lambda$ for non-negative constants $\delta$ and $\epsilon$.



Then $H(x) = w_{K,\epsilon,\delta}(x)$.
\label{thm:concave}
\end{thm}

\begin{proof}
First we show that for any integrable $\tau$ and $\Lambda$
\[ \E^x \left[ e^{-\beta \tau} (X_\tau - K)_+ - \int_0^\tau e^{-\beta s} C(\Lambda_s) ds \right] \leq w_{K,\epsilon,\delta}(x). \]
Then we show that there is a sequence of admissible strategies for which the value function converges to this upper bound.

We prove the result in the case $\epsilon \geq \delta/\beta$ when the cost of taking $\Lambda = 0$ is small relative to the proportional cost $C(\lambda)/\lambda$ associated with taking $\Lambda$ large. The proof in the case $\epsilon < \delta/\beta$ is similar, but slightly more complicated in certain verification steps, because the explicit form of $w^{K,\epsilon,\delta}$ is not so tractable.

When $\epsilon \geq \delta/\beta$ we have that $w = w_{K,\epsilon,\delta}$ is given by
\[ w(x) = \left\{ \begin{array}{ll} Ax^\theta - \frac{\delta}{\beta} & x \leq L  \\
                                    (x - K - \epsilon) & x > L \end{array} \right. , \]
where $L = \frac{\beta (K+\epsilon) - \delta}{\beta} \frac{\theta}{\theta - 1}$ and $A= \frac{1}{\theta} L^{1- \theta}$. Let $w^0(x) = w(x) \vee (x-K)_+$. Note that since $\frac{\beta}{\mu} > \theta$ we have $\frac{\theta}{\theta-1} > \frac{\beta}{\beta - \mu}$ and $L > \frac{\beta(K+\epsilon) - \delta}{\beta - \mu}$.

For fixed $\Lambda$ define $M^\Lambda = (M^\Lambda_t)_{t \geq 0}$ by $M^\Lambda_t = e^{- \int_0^t (\beta + \Lambda_s) ds} w(X_t) + \int_0^t  e^{- \int_0^s (\beta + \Lambda_u) du} [\Lambda_s w^0(X_s) - C(\Lambda_s)] ds$ and set $N_t = \int_0^t e^{- \int_0^s (\beta + \Lambda_u) du} \sigma X_s w'(X_s) dW_s$. Then $N=(N_t)_{t \geq 0}$ is a martingale and
\begin{equation}
dM^\Lambda_t  =  dN_t + e^{- \int_0^t (\beta + \Lambda_s) ds} \left[ \sL^X w - (\beta + \Lambda_t) w + \Lambda_t w^0 - C(\Lambda_t) \right] dt.
\label{eq:Mconcave}
\end{equation}
On $(0,L)$, $\sL^X w - \beta w = \delta$, and \eqref{eq:Mconcave} becomes
\[ dM^\Lambda_t  =  dN_t + e^{- \int_0^t (\beta + \Lambda_s) ds}[\delta - \Lambda_t w + \Lambda_t w^0 - C(\Lambda_t) ] dt \leq dN_t + e^{- \int_0^t (\beta + \Lambda_s) ds} [ \Lambda_t (w^0 - w - \epsilon)] dt \leq dN_t. \]
since $w^0 \leq w + \epsilon$.
Similarly, on $(L,\infty)$, $w(x) = (x - K) - \epsilon$ and since $L>K+\epsilon$, \eqref{eq:Mconcave} yields
\begin{eqnarray*}
 dM^\Lambda_t  & \leq & dN_t + e^{- \int_0^t (\beta + \Lambda_s) ds}[ \mu X_t - (\beta + \Lambda )(X_t - K - \epsilon) + \Lambda_t (X_t - K) - (\delta + \epsilon \Lambda_t )] dt \\
 & = & dN_t + e^{- \int_0^t (\beta + \Lambda_s) ds}[ (\mu - \beta) (X_t- L) + (\mu - \beta)L  + \beta(K+ \epsilon) - \delta ] dt \leq dN_t.
\end{eqnarray*}
Putting the two cases together we see that $M^\Lambda$ is a supermartingale for any strategy $\Lambda$.

The rest of the proof that $H \leq w$ follows exactly as in the the proofs of Lemma~\ref{lem:Gcomparison}, Lemma~\ref{lem:Ysupermg} and Proposition~\ref{prop:HleqG}, with $w$ replacing $G$. 


Now we show that there is a sequence of strategies for which the value function converges to $w=w_{K,\epsilon,\delta}$. Since $\delta + \epsilon \lambda$ is the largest convex minorant of $C$ there exists $(\lambda_n)_{n \geq 1}$ with $\lambda_n \uparrow \infty$ such that $\frac{C(\lambda_n)}{\lambda_n} \rightarrow \epsilon$.

Consider first the strategy of a constant rate of search $\lambda_n$, with stopping at the first event time of the associated Poisson process. Let $\tilde{H}_n$ denote the associated value function. Then
\begin{eqnarray*}
\tilde{H}_n(x) & = & \E^x \left[ \int_0^\infty \lambda_n e^{-\lambda_n t} dt \left\{ e^{-\beta t} (X_t - K)_+ - \int_0^t e^{-\beta s} C(\lambda_n) ds \right\} \right] \\
& \geq & \int_0^\infty \lambda_n e^{-\lambda_n t} dt \left\{ e^{-\beta t} (xe^{\mu t} - K) - \int_0^t e^{-\beta s} C(\lambda_n) ds \right\} \\
& = & \int_0^\infty \lambda_n e^{-(\lambda_n + \beta) t} (xe^{\mu t} - K) dt -  \int_0^\infty e^{-\beta s} C(\lambda_n) ds \int_s^\infty \lambda_n e^{-\lambda_n t} dt \\
& = & \frac{\lambda_n}{\lambda_n + \beta - \mu} x - \frac{\lambda_n}{\lambda_n + \beta} K - \frac{1}{\lambda_n + \beta} C(\lambda_n)
\end{eqnarray*}
and $\tilde{H}_n(x) \rightarrow  x-K - \epsilon$ as $n \uparrow \infty$.
Suppose $\epsilon \geq  \delta/\beta$. Let $L = \frac{\beta (K+\epsilon) - \delta}{\beta} \frac{\theta}{\theta - 1}$ and let $\tau_L = \inf \{ u : X_u \geq L \}$. Consider the strategy with rate $\hat{\Lambda}_n = \lambda_n I_{ \{ t \geq \tau_L \} }$, for which selling occurs at the first event time of the Poisson process with this rate, and let $\hat{H}_n$ be the value function associated with this strategy.

For $x \geq L$ we have $\hat{H}_n(x) = \tilde{H}_n(x) \rightarrow x-K-\epsilon = w_{K,\epsilon,\delta}(x)$.

For $x < L$, we have $\E^x[ e^{- \beta \tau_L} ] = ( \frac{x}{L} )^\theta$ and
\begin{eqnarray*}
\hat{H}_n(x) & = & \E^x \left[ e^{- \beta \tau_L} \tilde{H}_n(L) - \int_0^{\tau_L} e^{-\beta s} C(0) ds \right] \\
& = & \E^x \left[ e^{- \beta \tau_L} \left( \tilde{H}_n(L) + \frac{C(0)}{\beta} \right) - \frac{C(0)}{\beta}  \right] \\
& = & \left( \frac{x}{L} \right)^\theta \left[ \tilde{H}_n (L) + \frac{\delta}{\beta} \right] - \frac{\delta}{\beta} \\
& \rightarrow & w_{K, \epsilon, \beta}(x),
\end{eqnarray*}
where the last line follows from the definition of $L$ and some algebra.
\end{proof}

\subsection{An example}
In this example we consider a cost function of the form $C(\lambda) = \sqrt{\lambda}$. Then a (plausibly) good strategy is to take $\Lambda_t = 0$ if $X_t < L^* = \frac{\theta}{\theta - 1}$ and $\Lambda_t$ very large otherwise. It is immediate that the value function $H$ satisfies $H \leq w$; conversely, it is clear from Figure~\ref{fig:concave} that there exist strategies for which the value function is arbitrarily close to $w$.
  \begin{figure}[H] \center
\includegraphics [width=0.5\linewidth]  {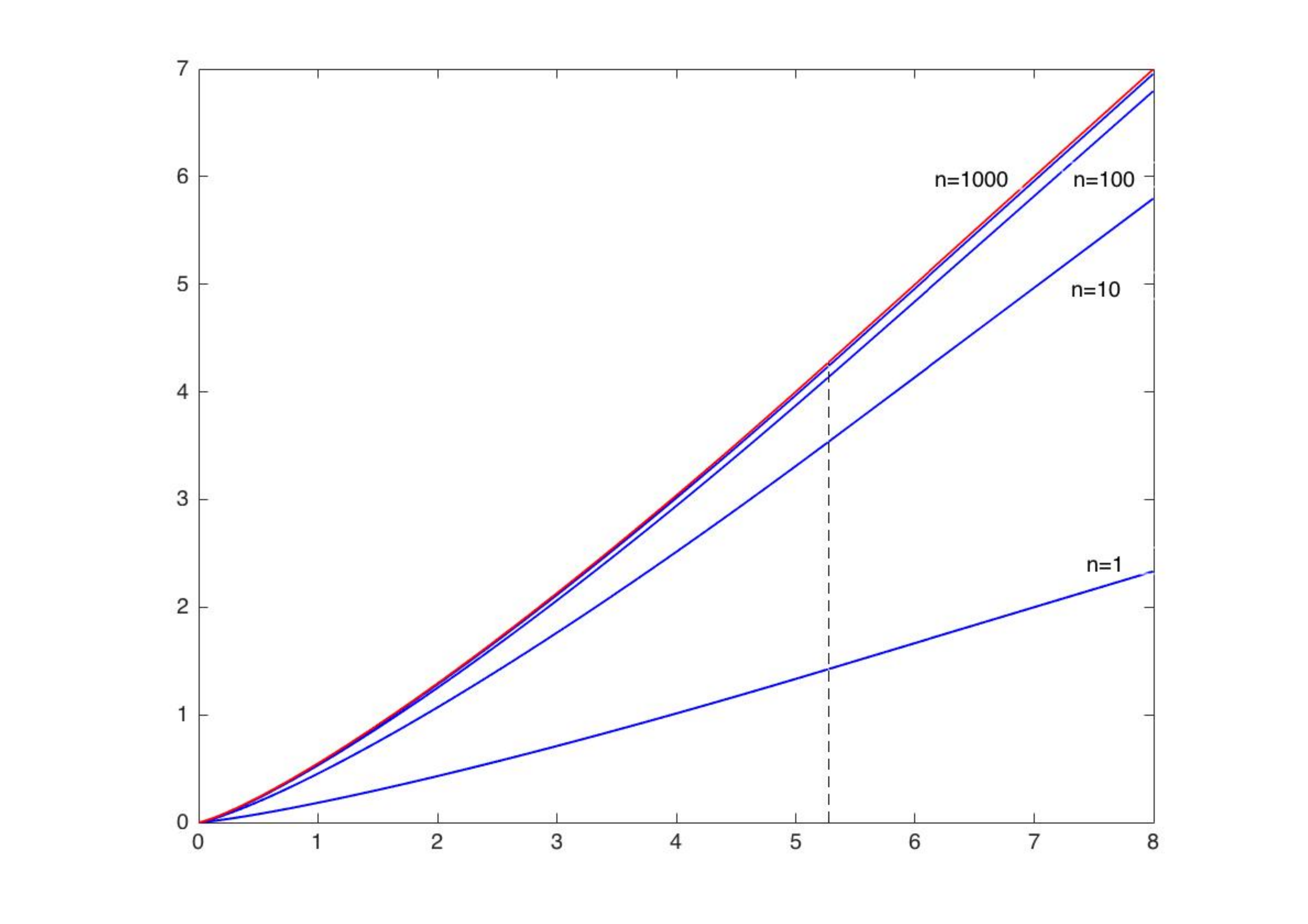}
\caption{ $(\beta,\mu,\sigma,K) =(5,3,3,1)$; the highest line is $w = w_{K,0,0}$, and the other lines are the value functions under the rate function $\Lambda_n(x) = n I_{ \{x \geq L^* \} }$.}
\label{fig:concave}
\end{figure}

\section{Further Examples}
\label{sec:examples}

\subsection{Addition of a linear cost}
\label{ssec:lc}

Let $C_0$ be a convex, lower semi-continuous, increasing cost function, and consider the impact of adding a linear cost to $C_0$; in particular, let $C_b : \R_+ \mapsto \R_+$ be given by $C_b(\lambda) = C_0(\lambda) + \lambda b$ for $b>0$.

Then the concave conjugates are such that $\tilde{C}_b(z ) = \tilde{C}_0((z-b)_+)$.

Suppose further that $G$, the solution of \eqref{eq:Gdef} of linear growth, is such that $G \geq 0$ on $\R_+$. The problem solution in the case of a purely quadratic cost function (recall Section~\ref{sssec:pure}) has this property. Then
\[ ( \{ (x -K)_+ - G \}_+ - b)_+ = \{ (x - (K+b))_+ - G \}_+ . \]
It follows that
\[ \tilde{C}_b( \{ (x-K)_+ - G \}_+ ) = \tilde{C}_0(( \{ (x -K)_+ - G \}_+ - b)_+) = \tilde{C}_0 (\{ (x - (K+b))_+ - G \}_+ ) \]
and then that the value function for a payoff $(x-K)^+$ with cost function $C_b$ is identical to the value function for a cost function $C_0(x)$ but with modified payoff $(x - (K+b))_+$.

Note that we see a similar result in the expansion \eqref{eq:Gexpansion} for $G$ in the large $x$ regime.

\subsection{Quadratic costs with positive fixed cost}
In this section we seek to generalise the results of Section~\ref{sssec:pure} on purely quadratic cost functions to other quadratic cost functions. In view of the results in Section~\ref{ssec:lc} the focus is on adding a positive intercept term, rather than a linear cost. Indeed the focus is on cost functions of the form $C(\lambda) = a + \frac{c}{2} \lambda^2$ for $a>0$.

In this section we will take $a$ and $c$ fixed and compare the cost fucntions $C_0(\lambda) = \frac{c}{2} \lambda^2$, $C_1(\lambda) = a + \frac{c}{2} \lambda^2$ and $C_>(\lambda) = a I_{ \{ \lambda > 0 \} } + \frac{c}{2} \lambda^2$. The difference between the last two cases is that in the final case, not searching at all incurs zero cost, whereas in the middle case, there is a fixed cost which applies irrespective of whether there is a positive rate of searching for offers or not.

In Section~\ref{sssec:pure} we saw that $H_0$, the value function for the cost $C_0(\lambda) = \frac{c}{2} \lambda^2$, solves
\[ \sL^X H_0 - \beta H_0 = \frac{[(g-H_0)_+]^2}{2c}. \]
There is a threshold $L$ with $L>K$, such that $H_0 > g$ on $(0,L)$ and $H_0<g$ on $(L,\infty)$. On $(0,L)$ we have that $H_0(x) = (L-K)\frac{x^\theta}{L^\theta}$; on $[L,\infty)$, $H_0$ solves $\frac{1}{2} \sigma^2 x^2 h'' + \mu x h' - \beta h = \frac{1}{2c} (x-K - h)^2$ subject to initial conditions $H_0(L) = (L-K)$ and $H_0'(L) = \theta\frac{L-K}{L}$. We adjust $L$ until we find a solution for which $H_0$ is of linear growth at infinity.

Now consider $C_1$ with asscoiated value function $H_1$. When $X$ is very small, there is little prospect of $X$ ever rising above $K$. Nonetheless the agent faces a fixed cost, even if she does not search for offers. It will be cheaper to search for offers, because although the payoff is zero when a candidate purchaser is found, it is then possible in our model to stop paying the fixed cost.


Suppose $X=0$. If the agent chooses to search for buyers at rate $\lambda$ then the expected time until a buyer is found is $\lambda^{-1}$. The expected discounted cost until a buyer is found is
\[ \int_0^\infty \lambda e^{-\lambda s} \int_0^s e^{-\beta u} \left( a + \frac{c}{2} \lambda^2 \right) du =\frac{a + \frac{c}{2} \lambda^2}{\beta + \lambda}. \]
This is minimised by the choice $\lambda= \lambda_*$ where $\lambda_* = \sqrt{{\beta^2} + \frac{2a}{c}} - \beta$ and the minimal cost is $h^-_*$ where
\[ h^-_* = \frac{a + \frac{c}{2} \lambda_*^2}{\beta + \lambda_*} = c \lambda_* = c \left[ \sqrt{{\beta^2} + \frac{2a}{c}} - \beta  \right] . \]
Then $H_1(0)= - h^-_*$.
(Another way to see this is to note that at 0 we expect $\sL^X H_1=0$ and therefore $H_1(0)$ to solve $- \beta h = \tilde{C}(- h) = a - \frac{h^2}{2c}$.)
Then, the value function $H_1$ is such that there exists $\ell$ and $L$ with $0<\ell < K < L < \infty$ such that $H_1$ is $C^1$ with $H_1<0$ on $(0,\ell)$, $H_1(x)>(x-K)_+$ on $(\ell,L)$ and $H_1(x)<(x-K)_+$ on $(L,\infty)$ and such that $H_1$ satisfies
\[ \sL^X h - \beta h = \left\{ \begin{array}{lll} a - \frac{1}{2c} h^2    & &  x < \ell; \\
                                             a                       & \; &\ell < x < L; \\
                                             a - \frac{1}{2c} (g-h)^2                & & L < x.
                                             \end{array} \right. \]
See Figure~\ref{fig:C1}. Considering $H_1$ on $(\ell,L)$ we have $H_1(x) = A x^\theta + B x^\phi - \frac{a}{\beta}$ for some constants $A$ and $B$ chosen so that $H_1(\ell)=0$ and $H_1(L) = (L-K)$:
\[ A = \frac{L^{-\phi}(L-K  + \frac{a}{\beta}) - \ell^{-\phi} \frac{a}{\beta}}{L^{\theta-\phi} - \ell^{\theta-\phi}}, \hspace{20mm} B =  \frac{ \ell^{-\phi} L^{\theta - \phi} \frac{a}{\beta} - \ell^{\theta-\phi} L^{-\phi}(L-K + \frac{a}{\beta})}{L^{\theta-\phi} - \ell^{\theta-\phi}} . \]
Then for general $\ell$ and $L$ we can use value matching and smooth fit at $\ell$ and $L$ to construct a solution on $(0,\infty)$. Finally, we adjust $\ell$ and $L$ until $H_1(0)=-h^-_*$ and $H_1$ has linear growth.


\begin{figure*}[!ht]
    \centering
    \begin{subfigure}[t]{0.45\textwidth}
        \includegraphics[width=1\textwidth]{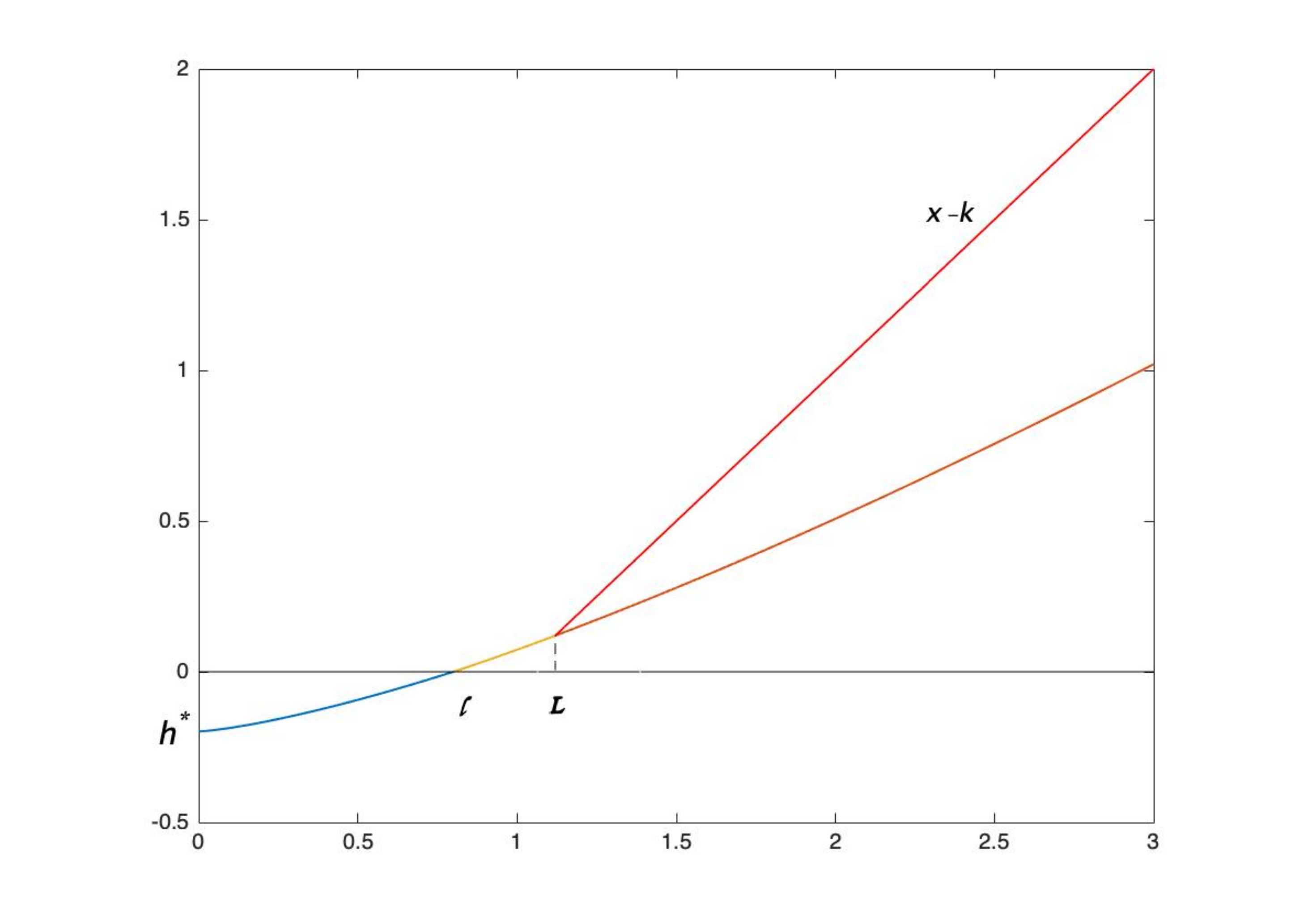}
        \caption{The value function $H_1(x)$.}
    \end{subfigure}%
    ~
    \begin{subfigure}[t]{0.45\textwidth}
        \centering
        \includegraphics[width=1\textwidth]{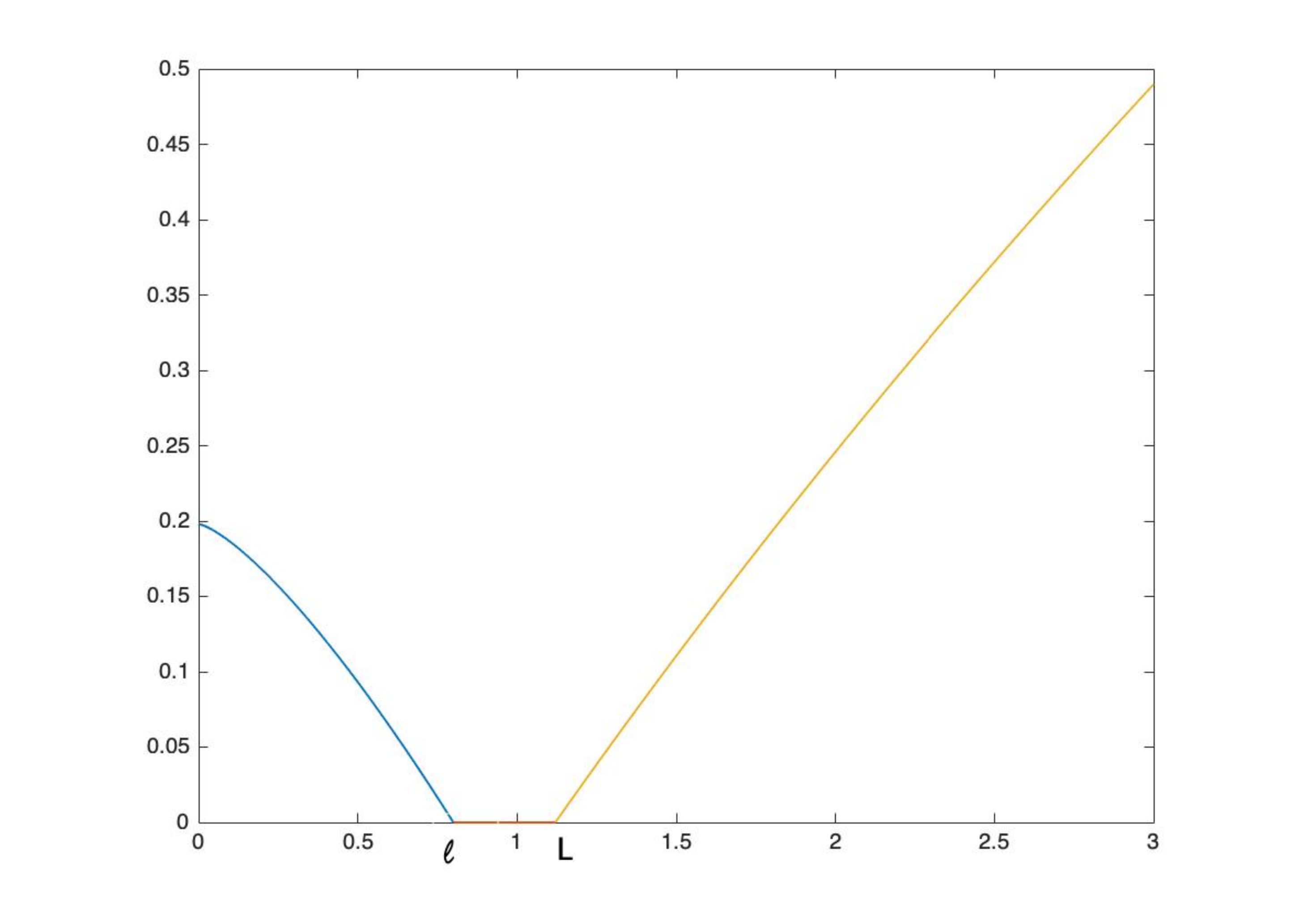}
        \caption{The optimal rate $\Lambda_1^*(x)$.}

    \end{subfigure}
    \caption{$(\beta,\mu,\sigma,K) =(5, 3 ,2,1)$. The cost function is $C_1(\lambda)=1+\lambda^2$. The left figure shows the value function, and the right figure the optimal stopping rate. There are two critical thresholds $\ell= \ell^*$ and $L=L^*$.
    }
    \label{fig:C1}
 \end{figure*}
In Figure~\ref{fig:C1} we plot the value function and optimal rate for the Poisson process for $C_1(\lambda) = 1 + \lambda^2$. There are two critical thresholds $\ell^*$ and $L^*$ with $0 < \ell^* < K < L^*$.
Above $L^*$ the agent would like to stop in order to receive the payoff $(x-K)$, and is willing to expend effort to try to generate selling opportunities in order to receive the payoff before discounting reduces the worth. Below $\ell^*$ the agent would like to stop, even though the payoff is zero, and is willing to expend effort to generate stopping opportunities in order to limit the costs they incur prior to stopping.
Between $\ell^*$ and $L^*$ the agent does not expend any effort searching for offers and would not accept any offers which were received.

Now consider the cost function $C_>(\lambda)=aI_{\{ \lambda>0 \} } + \frac{c}{2} \lambda^2$ with associated value function $H_>$. We have $\widetilde{C}_>(z) = 0$ for $z \leq \sqrt{2ac}$ and $\widetilde{C}_> = a - \frac{z^2}{2c}$ for $z \geq \sqrt{2ac}$.
As in the pure quadratic case, there is always the option of taking $\Lambda \equiv 0$ at zero cost, so that the value function is non-negative. It follows that $H_>(0)=0$.
There is a threshold $L$ below which the agent does not search for offers. But, this threshold is not the boundary between the sets $\{x : H_>(x)>g(x) \}$ and $\{ x : H_>(x) < g(x) \}$, since when $g(x)-H_>(x)$ is small, it is still preferable to take $\Lambda = 0$, rather than to incur the cost of strictly positive $\lambda$. Instead $L$ separates the sets $\{x : H_>(x)>g(x) - \sqrt{2ac} \}$ and $\{ x : H_>(x) < g(x) - \sqrt{2ac} \}$.
We find that there is a threshold $L$ with $L > K$ such that on $(0,L)$, $H_>$ solves $\sL^X h - \beta h = a$. At $L$ we have $H_>(L) = (L-K-\sqrt{2ac})$ and it follows that on $(0,L)$ we have $H_>(x) = \frac{L-K-\sqrt{2ac}}{L^\theta} x^\theta$.
Then, on $(L,\infty)$, $H_>$ solves $\sL^X h - \beta h = a - \frac{(x-K-h)^2}{2c}$, subject to value matching and smooth fit conditions at $x=L$. Finally, we adjust the value of the threshold $L$ until $H$ is of linear growth for large $x$.


\begin{figure*}[!ht]
    \centering
    \begin{subfigure}[t]{0.45\textwidth}
        \includegraphics[width=1\textwidth]{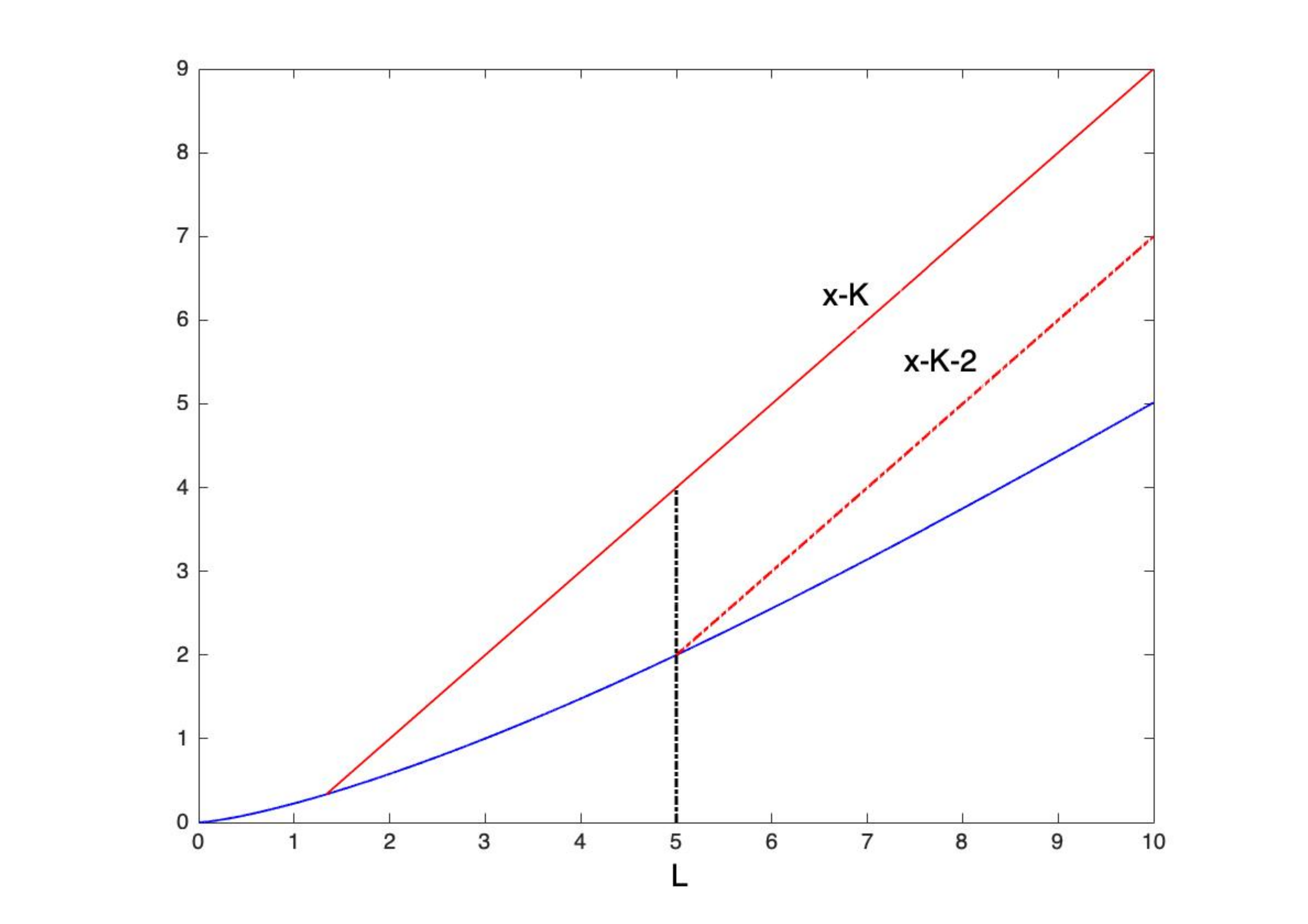}
        \caption{The value function $H_>(x)$}
    \end{subfigure}%
    ~
    \begin{subfigure}[t]{0.45\textwidth}
        \centering
        \includegraphics[width=1\textwidth]{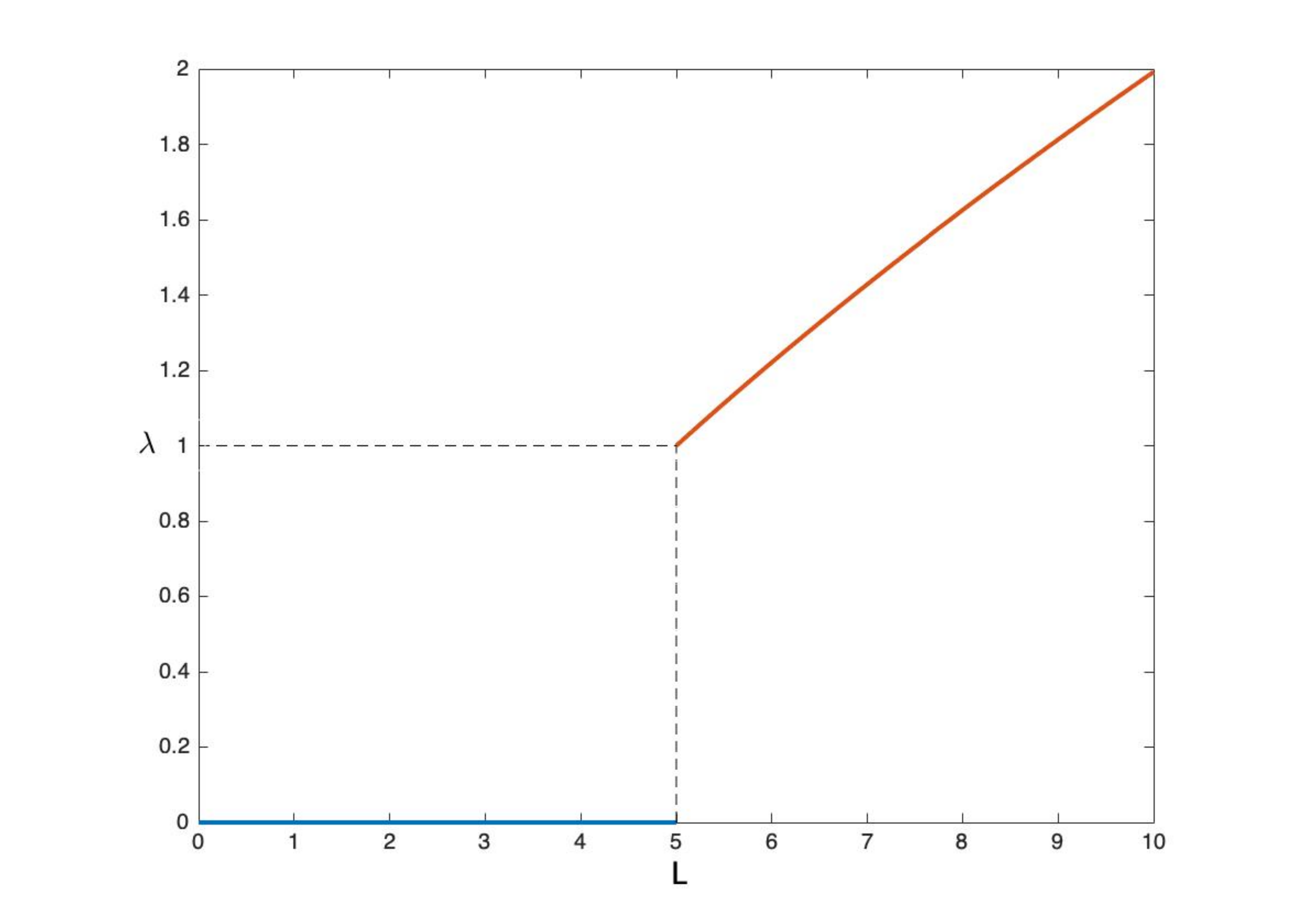}
        \caption{The optimal rate $\Lambda^*_>(x)$}

    \end{subfigure}
    \caption{$(\beta,\mu,\sigma,K) =(5, 3 ,2,1)$. The cost function is $C_>(\lambda) = I_{\{ \lambda>0 \} } + \lambda^2$. The highest convex minorant is $\breve{C}_>(\lambda) =  \lambda + [(\lambda - 1)_+]^2$. (Here we use the fact that $\sqrt{{2a}{c}}=2$.)
     }
    \label{fig:C>}
 \end{figure*}

In Figure~\ref{fig:C>} we plot the value function $H_>$ and optimal rate $\Lambda^*_>$. We see that $\Lambda^*_>$ never takes values in $(0,1)$ where $C_> > \breve{C}_>$. Either it is optimal to spend a non-negligible amount of effort on searching for candidate buyers, or it is optimal to spend no effort.


\begin{figure*}[!ht]
    \centering
    \begin{subfigure}[t]{0.45\textwidth}
        \includegraphics[width=1\textwidth]{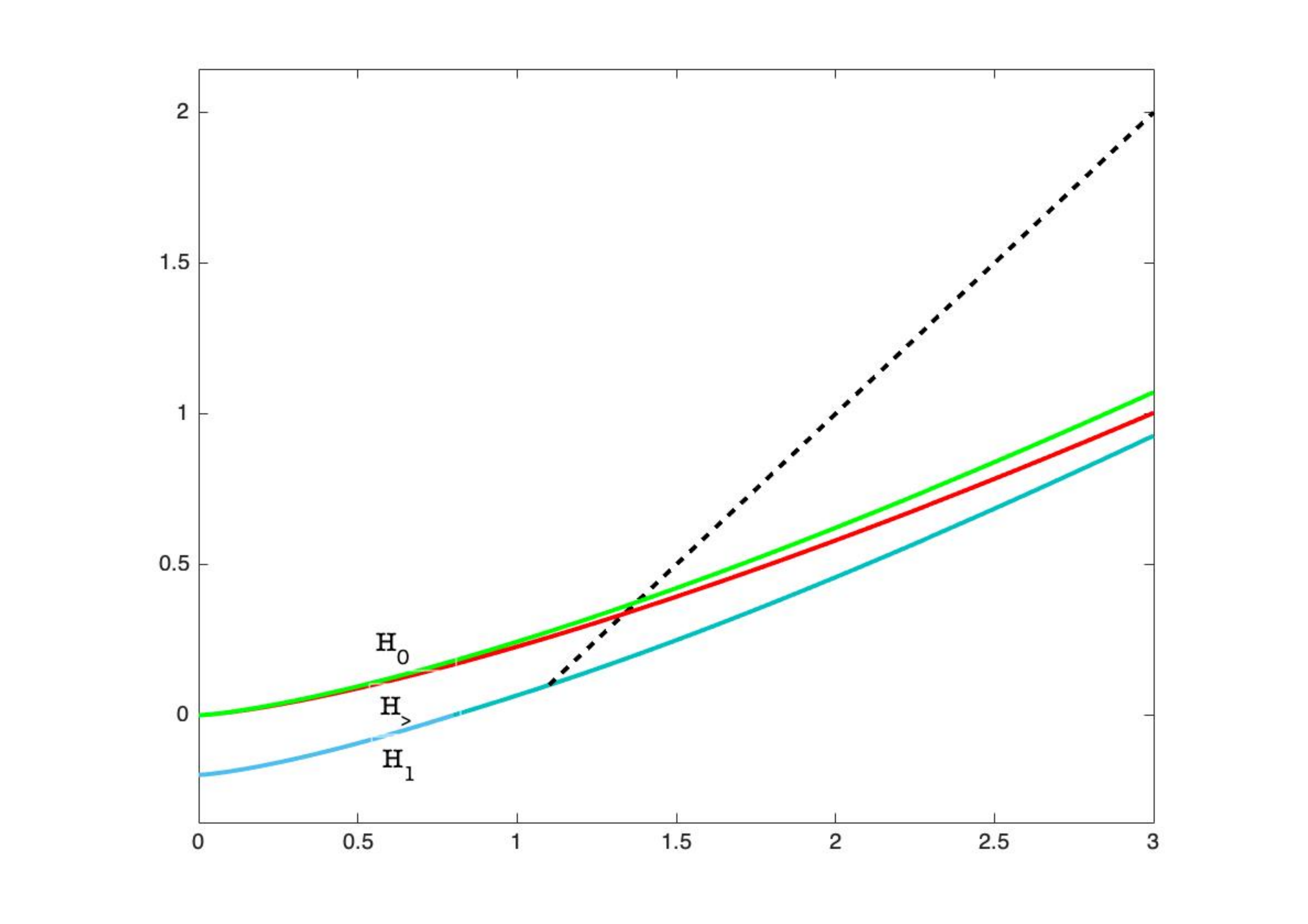}
        \caption{Comparison of the value functions}
    \end{subfigure}%
    ~
    \begin{subfigure}[t]{0.45\textwidth}
        \centering
        \includegraphics[width=1\textwidth]{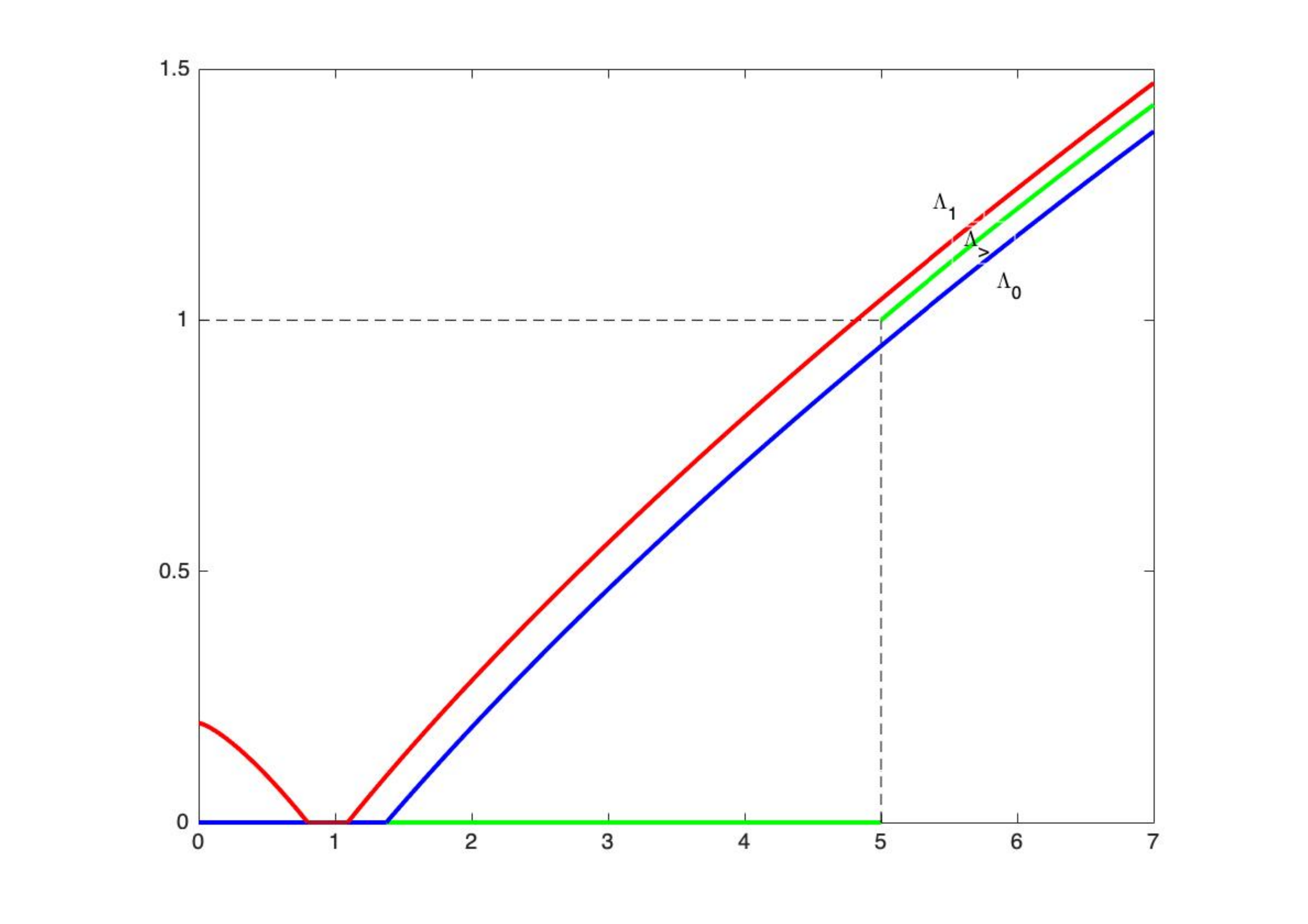}
        \caption{Comparison of the optimal stopping rates}

    \end{subfigure}
    \caption{$(\beta,\mu,\sigma) =(5, 3 ,2,1)$. The cost functions we consider are $C_0(\lambda) = \lambda^2$, $C_>(\lambda) = I_{\{ \lambda>0 \}} + \lambda^2$ and $C_1(\lambda) = 1 + \lambda^2$. The left figure plots the value functions under optimal behaviour, and the right figure plots the optimal rates for the Poisson process. For $x>5$ we have $\Lambda_1^* > \Lambda_>^* > \Lambda_0^*$. For small $x$, $\Lambda_1^* > 0 = \Lambda_>^* = \Lambda_0$.}

    \label{fig:comparison}
 \end{figure*}

Figure~\ref{fig:comparison} compares the value functions and optimal rates for the Poisson process for the three cost functions $C_0(\lambda) = \lambda^2$, $C_>(\lambda) = I_{\{ \lambda>0 \}} + \lambda^2$ and $C_1(\lambda) = 1 + \lambda^2$. Since $C_0 \leq C_> \leq C_1$ we must have that $H_0 \geq H_> \geq H_1$ and we see that away from $x=0$ this inequality is strict.
Indeed, especially for small $x$, $H_0$ and $H_>$ are close in value. The differences in optimal strategies are more marked. For large $x$ the fact that $H_0>H_> >H_1$ means that $\Lambda^*_0 < \Lambda^*_> < \Lambda^*_1$, and thus that even though $C_1>C_0$, the agent searches at a higher rate under $C_1$ than under $C_0$. Note that, we only have $\Lambda^*_> >0$ for $x$ above a critical value (in our case, approximately 5). Conversely, for $C_1$ there is a second region where $\Lambda_1>0$, namely where $x$ is small.

\subsection{Cost functions defined on a subset of $\R_+$}
\label{ssec:subinterval}
In this section we consider the case where there is a strictly positive lower bound on the rate at which offers are received. In fact, in our example the optimal rate of offers takes values in a two-point set. Nonetheless, we see a rich range of behaviours.

Suppose $\Lambda$ takes values in $[\ul,\ol]$ where $0<\ul<\ol<\infty$ and suppose $C : [\ul,\ol] \mapsto \R_+$ is increasing and concave.
Introduce $\breve{C}:[\ul,\ol] \mapsto [0,\infty)$ defined by $\breve{C}(\lambda) = C(\ul) + \frac{\lambda - \ul}{\ol-\ul} (C(\ol)-C(\ul))$. Finally introduce $C^\dagger: [0,\infty) \mapsto [0,\infty]$ by
\[ C^\dagger (\lambda) = \left\{ \begin{array}{lcl} C(\ul) & \; & \lambda < \ul, \\
                                                   \breve{C}(\lambda) && \ul \leq \lambda \leq \ol,  \\
                                                   \infty    && \ol < \lambda . \end{array} \right. \]
Write $a = C(\ul)$ and $b = \frac{ (C(\ol)-C(\ul))}{\ol-\ul}$. Then $C^\dagger$ has concave conjugate $\tilde{C}^\dagger (z) = a - \ul z$ for $z \leq b$ and $\tilde{C}^\dagger(z) = a - b \ul - (z-b) \ol$ for $z>b$.

Suppose first that $C(\ul)=a=0$. Then the value function $H$ is positive, increasing and $C^1$ and satisfies
\[ \sL^X h - \beta h = \left\{ \begin{array}{lcl} 0 & \; & x < L, \\
                                                 - \ul (g - h) && L \leq x \leq M,   \\
 - b \ul - \ol(g - h - b) && M < x, \end{array} \right. \]
where $L$ and $M$ are constants satisfying $0<K<L<M$ which must be found as part of the solution, and are such that $h(x) > (x-K)$ on $(0,L)$, $(x-K) > h(x) > x-K-b$ on $(L,M)$ and $(x-K-b)>h(x)$ on $(M,\infty)$. See Figure~\ref{fig:subinterval1}.

Fix $L$ and consider constructing a solution to the above problem with $H(0)$ bounded. On $(0,L)$ we have that $H(x) = Ax^\theta +B x^\phi$ and the requirement that $H$ is bounded means that $B=0$ and then $A= (L- K)L^{-\theta}$. We then use the $C^1$ continuity of $H$ at $L$
to find the constants $C$ and $D$ in the expression for $H$ over $(L,M)$:
\begin{equation}
\label{eq:HLM} H(x) = C x^{\underline{\theta}} + Dx^{\underline{\phi}} + \frac{\ul}{\ul + \beta - \mu} x - \frac{K \ul}{\ul +\beta}.
\end{equation}
Here $\underline{\phi}$, $\underline{\theta}$  with $\underline{\phi} < 0 <1 < \underline{\theta}$ are solutions to $Q_{\ul}(\cdot)=0$ where
$Q_\lambda(\psi) = \frac{1}{2} \sigma^2 \psi(\psi - 1) + \mu \psi - (\beta + \lambda)$.
In turn, we can find the value of $M=M(L)$ where $H$ given by \eqref{eq:HLM} crosses the line $y(x)=x-K-b$, and then value matching at $M$ gives us the value of $E$ in the expression for $H$ over $[M,\infty)$:
\[ H(x) = E x^{\overline{\phi}} + \frac{\ol}{\ol + \beta - \mu} x - \frac{(K+b) \ol- b \ul}{\ol +\beta} \]
where $\overline{\phi}$ is the negative root of $Q_{\ol}(\cdot)=0$. (There is no term of the form $x^{\overline{\theta}}$ since $H$ must be of linear growth at infinity.) Finally, we can solve for $L$ by matching derivatives of $H$ at $M$.


\begin{figure*}[!ht]
    \centering
    \begin{subfigure}[t]{0.45\textwidth}
        \includegraphics[width=1\textwidth]{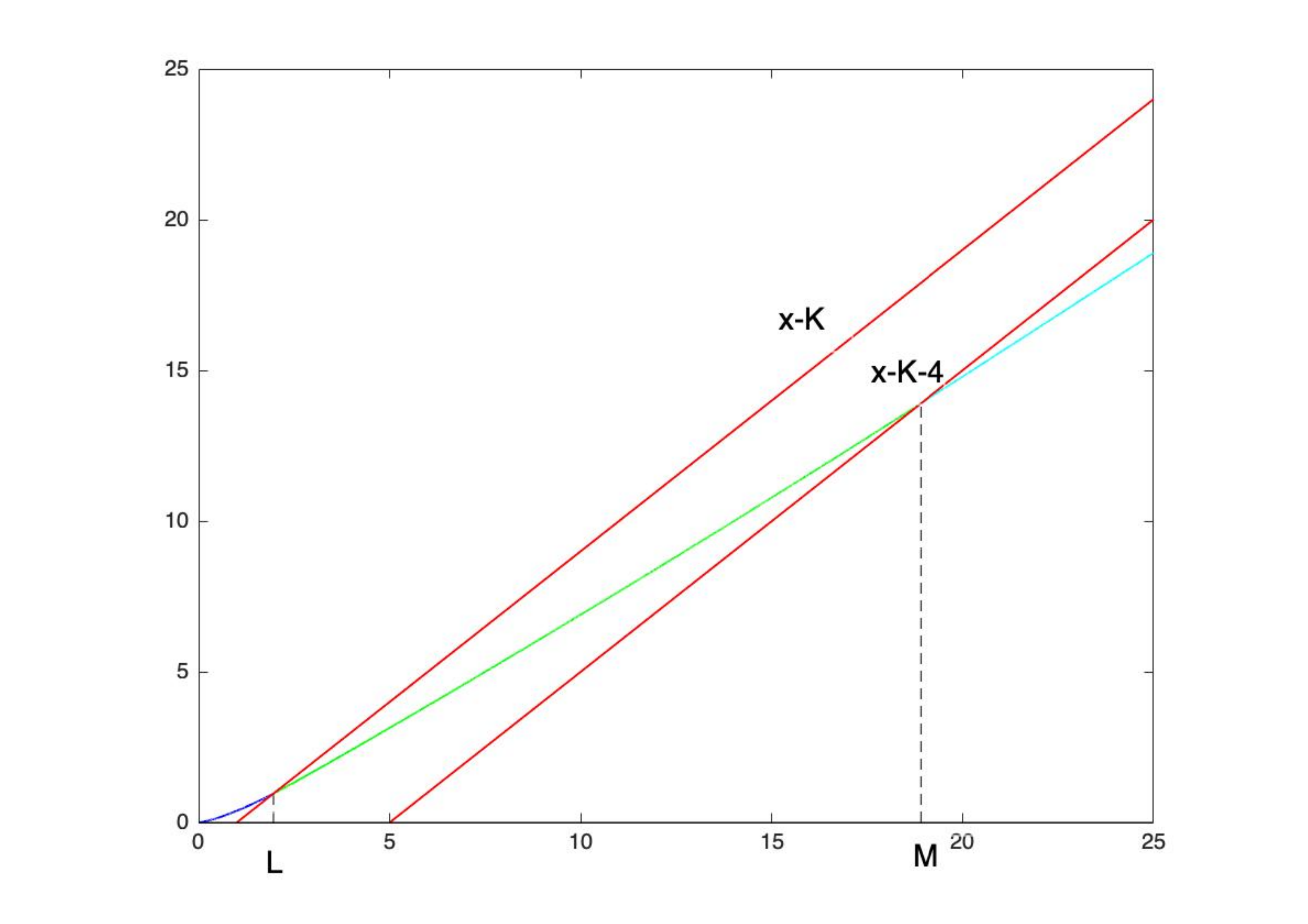}
        \caption{The value function $H$.}
    \end{subfigure}%
    ~
    \begin{subfigure}[t]{0.45\textwidth}
        \centering
        \includegraphics[width=1\textwidth]{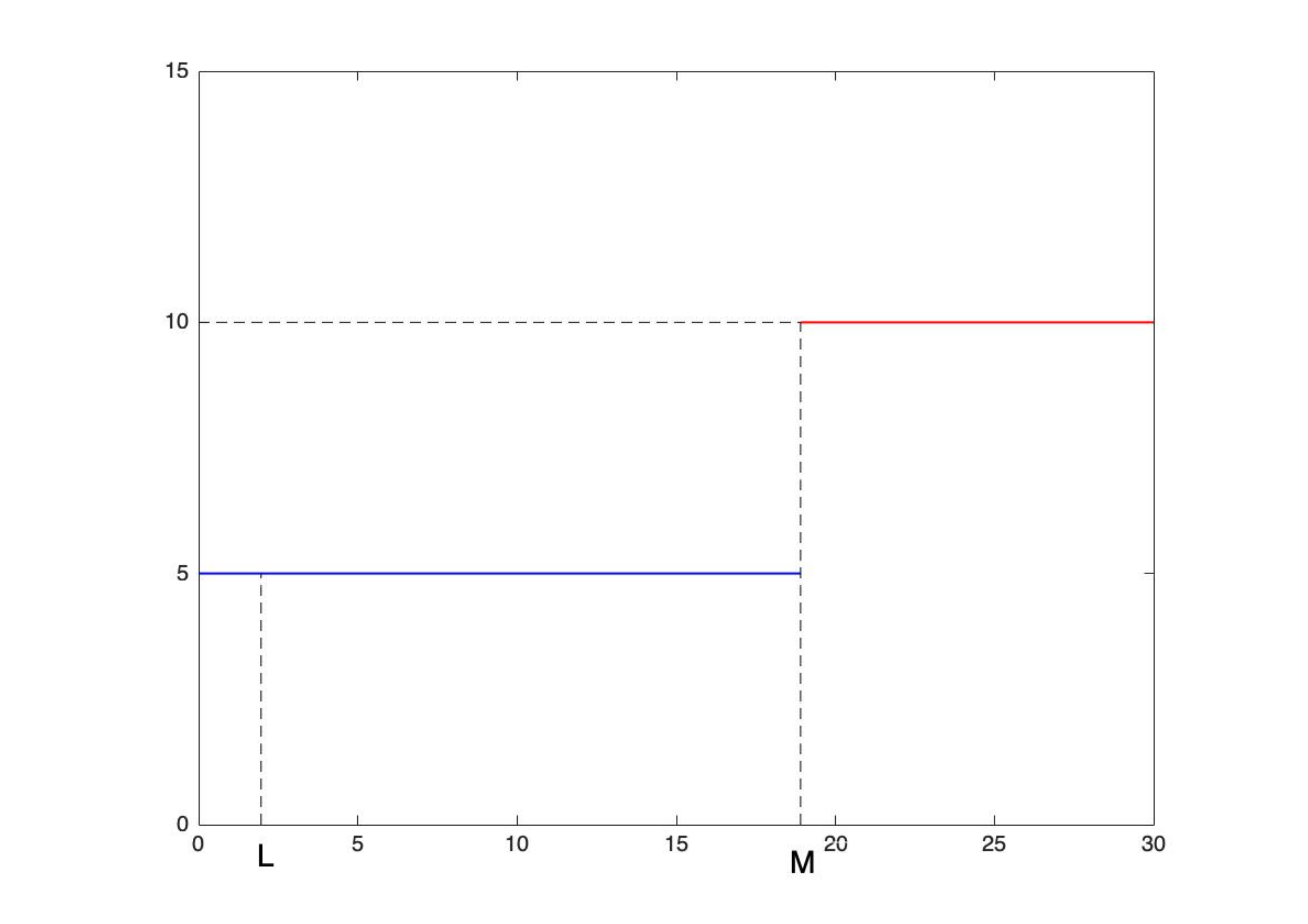}
        \caption{The optimal rate $\Lambda^*$.}

    \end{subfigure}
    \caption{$(\beta,\mu,\sigma,K,\ul, \ol, C(\ul), C(\ol)) =(5, 3 ,2,1,5,10, 0, 20)$. Note that $b = \frac{ (C(\ol)-C(\ul))}{\ol-\ul}=4$.
    The left figure plots the value function and the right figure plots the optimal rate function.
    $\Lambda$ is constrained to lie in $[5,10]$, and the cost function is $20 I_{ \{\lambda>5 \}}$. We see that $\Lambda^*$ takes values in $\{5,10\}$.}
    \label{fig:subinterval1}
 \end{figure*}

Figure~\ref{fig:subinterval1} plots the value function and the optimal rate function. The state space splits into three regions. On $x>M$ the asset is considerably in-the-money and the agent is prepared to pay the cost to generate a higher rate of selling opportunities. When $x$ is not quite so large, and $L<x<M$, the agent is not prepared to pay this extra cost, but will sell if opportunities arise. However, if $x<L$ then selling opportunities will arise (we must have $\Lambda \geq \ul$) but the agent will forgo them. Ideally the agent would choose $\Lambda=0$, but this is not possible. Instead the agent takes $\Lambda = \ul$, but synthesises a rate of zero, by rejecting all offers.

When $C(\ul)>0$, the agent will not pay the fixed cost indefinitely when $X$ is small. The behaviour for large $X$ is unchanged, but the agent will now stop if offers arrive when the value of continuing is negative, including when $X$ is near zero. There are two cases depending on whether $\frac{C(\ul)}{\ul+\beta} \leq \frac{C(\ol)}{\ol+\beta}$ or otherwise. In the former case, when $X$ is small it is cheaper to pay the lower cost and to stop if opportunities arise, than to pay the higher cost with the hope of stopping sooner. In the latter case, the comparison is reversed. We find that $H$ solves
\[ \sL^X h - \beta h  = \tilde{C}((g-h)_+) \]
subject to $h(0) = - \min_{\lambda \in \{ \ul, \ol \} } \{ \frac{C(\lambda)}{\lambda + \beta} \}$ and the fact that $h$ is of linear growth at infinity. The solution is smooth, except at points where $\tilde{C}((g-h)_+)$ is not differentiable. This may be at $K$ where $g$ is not differentiable, or when $g=h$, or, since $\tilde{C}$ is non-differentiable at $b$, when $g-h=b$.

Figure~\ref{fig:subinterval2} shows the value function and the optimal search rate in the case where $\frac{C(\ul)}{\ul+\beta} \leq \frac{C(\ol)}{\ol+\beta}$. This means that when $x$ is small the agent expends as little effort as possible searching for offers, although they do accept any offers which arrive. There is also a critical threshold $M$, beyond which it is optimal to put maximum effort into searching for offers. There are then two sub-cases depending on whether costs are small or large. If costs are large then the agent will always accept any offer which comes along (Figure~\ref{fig:subinterval2}(c) and (d)). However, when costs are small (Figure~\ref{fig:subinterval2}(a) and (b)), there is a region $(\ell, L)$ over which $h(x)>g(x)=(x-K)_+$. Then, as in the region $(0,L)$ when $C(\ul)=0$, even when there is an offer the agent chooses to reject it. Effectively, the agent creates a zero rate of offers by thinning out all the events of the Poisson process.

\begin{figure*}[!ht]
    \centering

    \begin{subfigure}[t]{0.45\textwidth}
        \includegraphics[width=1\textwidth]{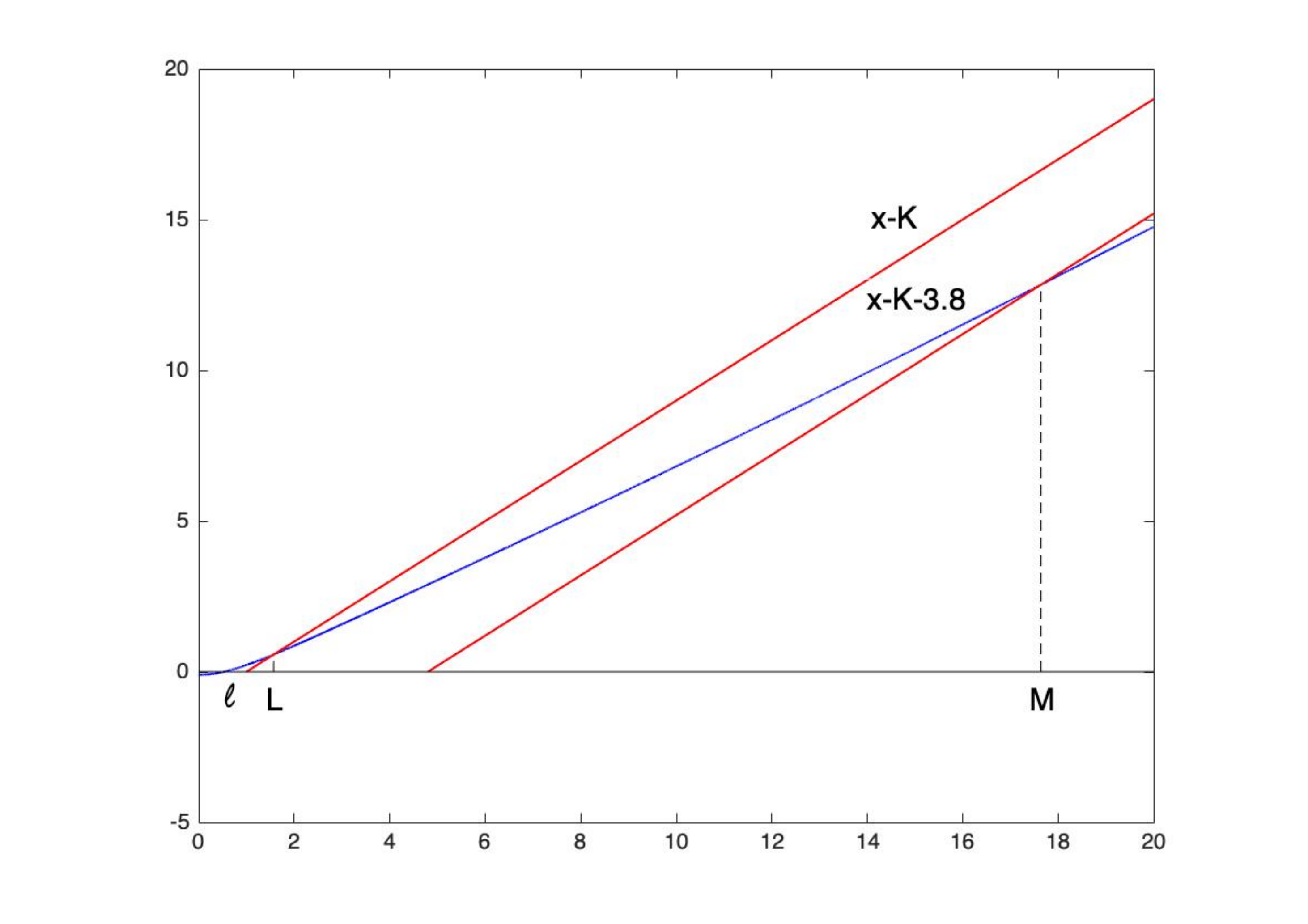}
        \caption{The value function $H$ in the case $(C(\ul)=1,C(\ol)=20)$.}
    \end{subfigure}%
    ~
    \begin{subfigure}[t]{0.45\textwidth}
        \centering
        \includegraphics[width=1\textwidth]{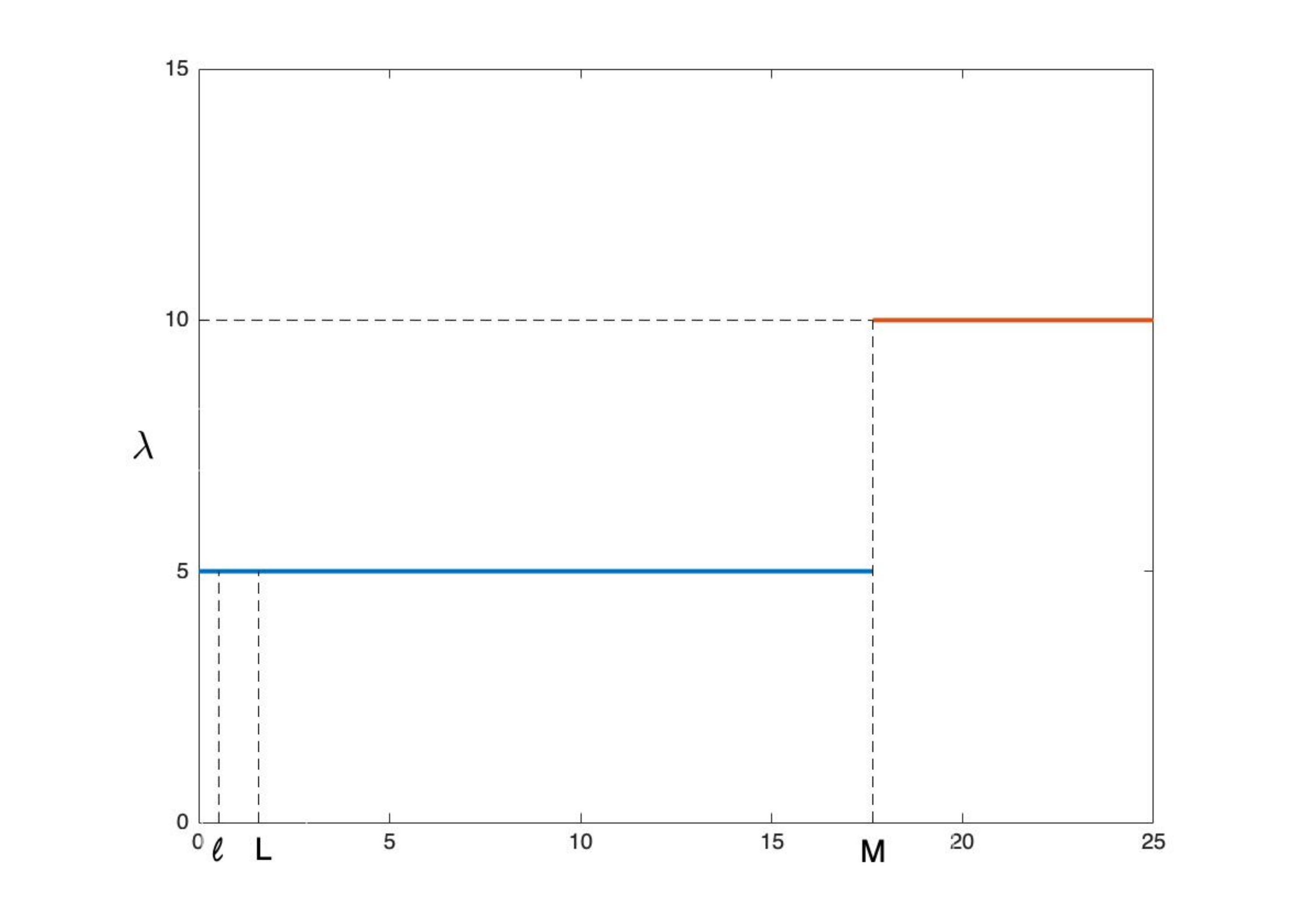}
        \caption{The optimal rate $\Lambda^*$ in the case $(C(\ul)=1,C(\ol)=20)$.}

    \end{subfigure}
    \begin{subfigure}[t]{0.45\textwidth}
        \includegraphics[width=1\textwidth]{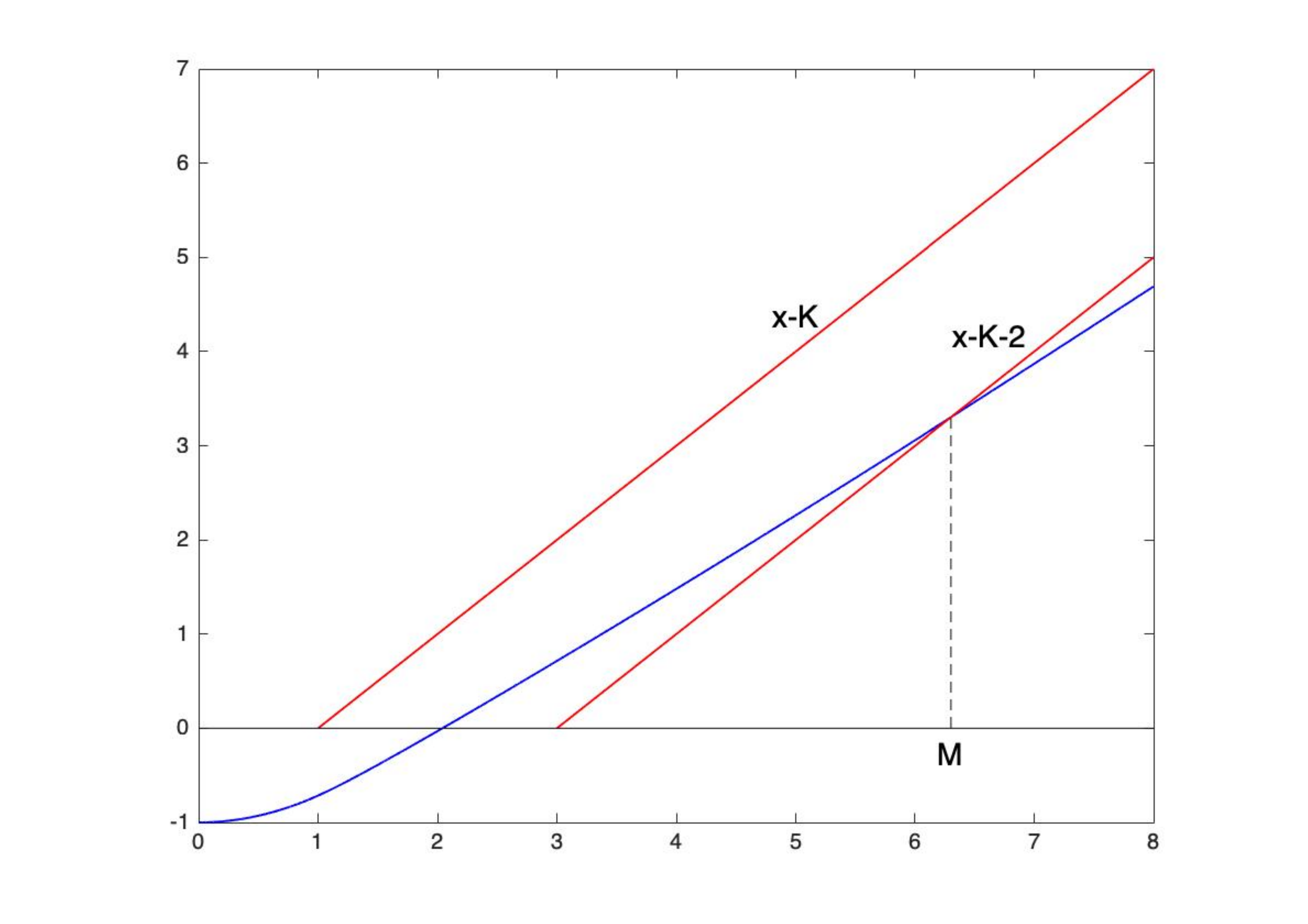}
        \caption{The value function $H$ in the case $(C(\ul)=10,C(\ol)=20)$.}
    \end{subfigure}%
    ~
    \begin{subfigure}[t]{0.45\textwidth}
        \centering
        \includegraphics[width=1\textwidth]{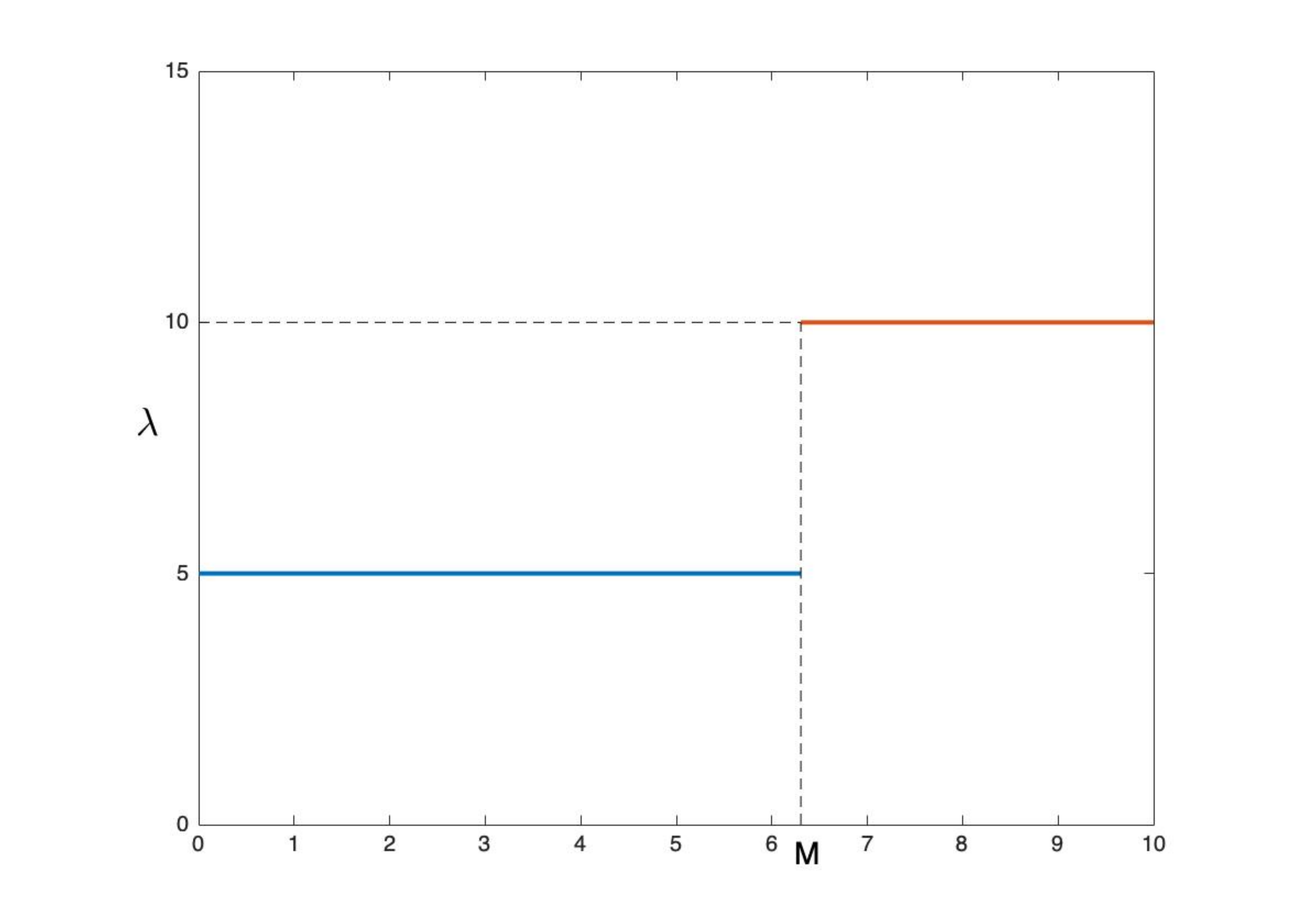}
        \caption{The optimal rate $\Lambda^*$ in the case $(C(\ul)=10,C(\ol)=20)$.}

    \end{subfigure}
    \caption{$(\beta,\mu,\sigma,K,\ul, \ol) =(5, 3 ,2,1,5,10)$.
    The left panels plot the value function and the right panels plot the optimal rate function. In each row $\frac{C(\ul)}{\ul + \beta} < \frac{C(\ul)}{\ul + \beta}$.
    In the case of lower costs ($C(\ul)=1$) there is a region $(\ell,L)$ where $H(x)>g(x)$ and the agent chooses to continue rather than to stop.}

    \label{fig:subinterval2}
 \end{figure*}

Figure~\ref{fig:subinterval3} shows the value function and the optimal search rate in the case where $\frac{C(\ul)}{\ul+\beta} > \frac{C(\ol)}{\ol+\beta}$.
Then, necessarily, $b = \frac{C(\ol)-C(\ul)}{\ol - \ul} < \frac{C(\ol)}{\ol + \beta}$. When $x$ is small the agent searches at the maximum rate to generate an offer as quickly as possible. Necessarily $H(0) < - b$. If costs are large enough, then $H(x) < (x-k)_+ - b$ for all $x$, see Figure~\ref{fig:subinterval3}(a) and (b). Then the agent wants to stop as soon as possible, and is prepared to pay the higher cost rate in order to facilitate this. As costs decrease, we may have $(x-k)_+ -b \leq H(x)$ for some $x$, whilst the inequality $H(x) < (x-k)_+$ remains true, see Figure~\ref{fig:subinterval3}(c) and (d). Then there is a region $(m,M)$ over which the optimal strategy is $\Lambda^*(x)=\ul$. The agent still accepts any offer which is made. Finally, if costs are small enough we find that there is a neighbourhood $(\ell, L)$ of $K$ for which $H(x)>(x-K)_+$. Then, on $(\ell, L)$ the agent takes $\Lambda^*(x) = \ul$, but chooses to continue rather than stop if any offers are made.

\begin{figure*}[!ht]
    \centering
    \begin{subfigure}[t]{0.45\textwidth}
        \includegraphics[width=1\textwidth]{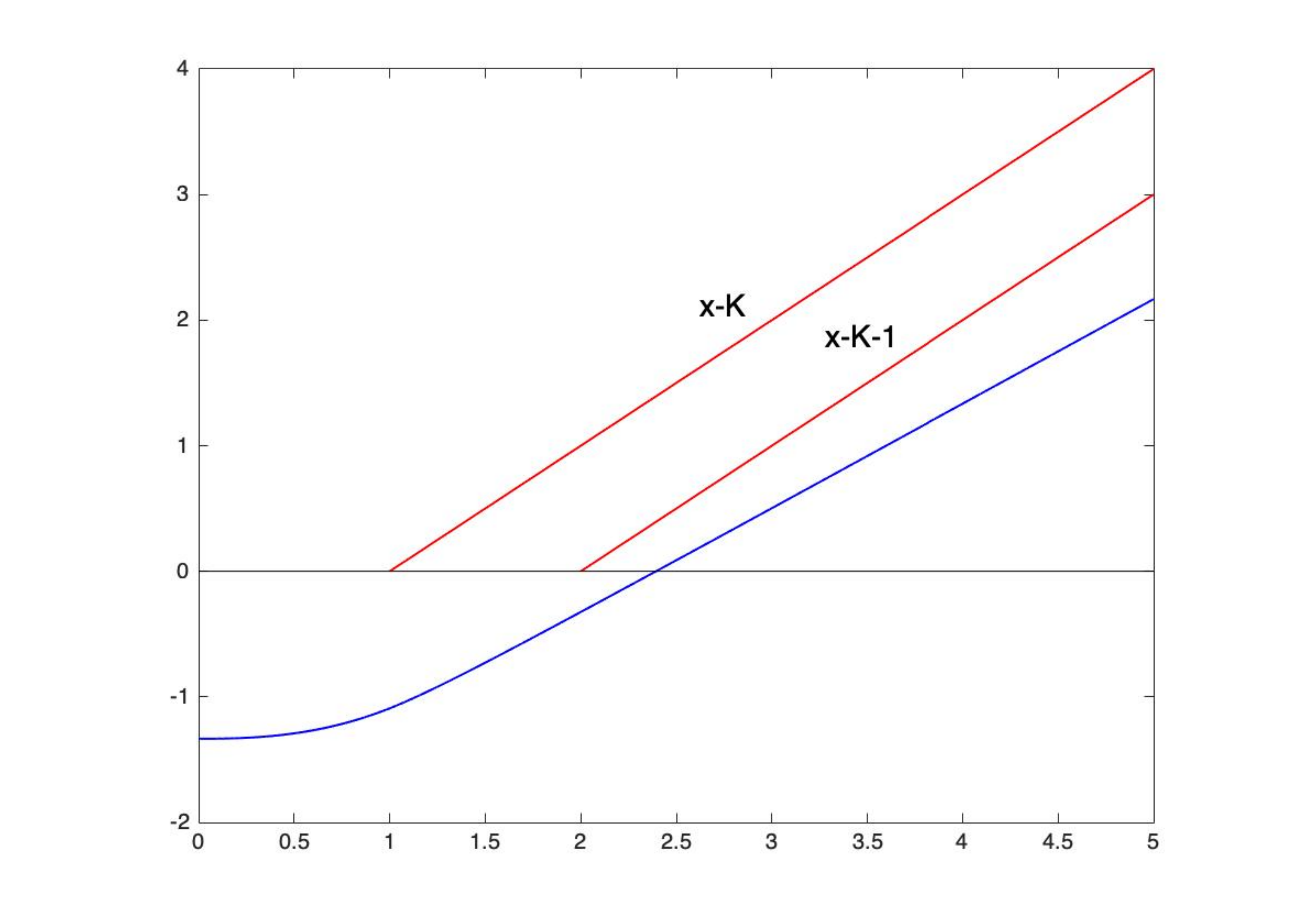}
        \caption{The value function $H$ in the case $(C(\ul)=15,C(\ol)=20)$. $b=1$.}
    \end{subfigure}%
    ~
    \begin{subfigure}[t]{0.45\textwidth}
        \centering
        \includegraphics[width=1\textwidth]{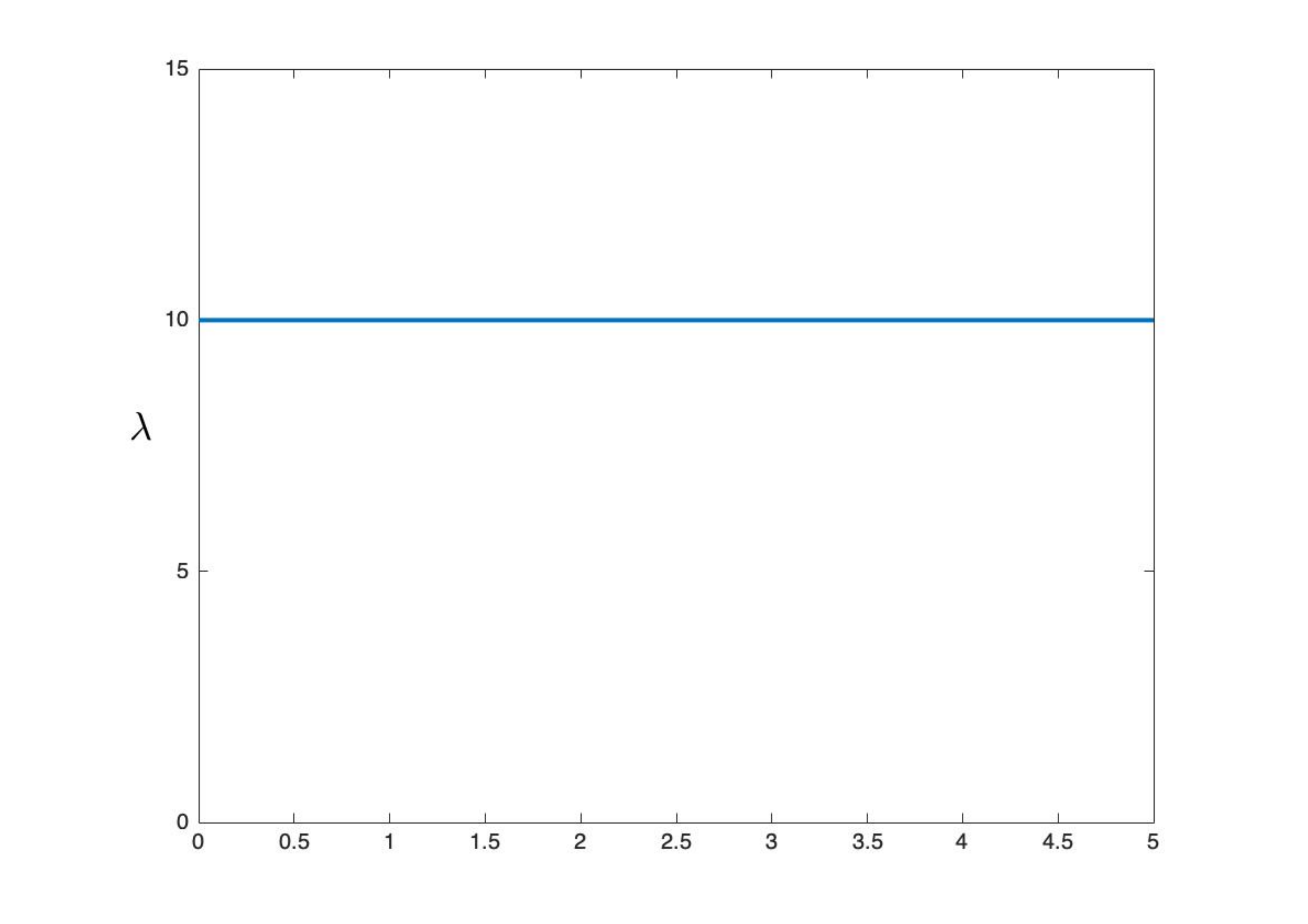}
        \caption{The optimal rate $\Lambda^*$ in the case $(C(\ul)=15,C(\ol)=20)$.}

    \end{subfigure}

    \begin{subfigure}[t]{0.45\textwidth}
        \includegraphics[width=1\textwidth]{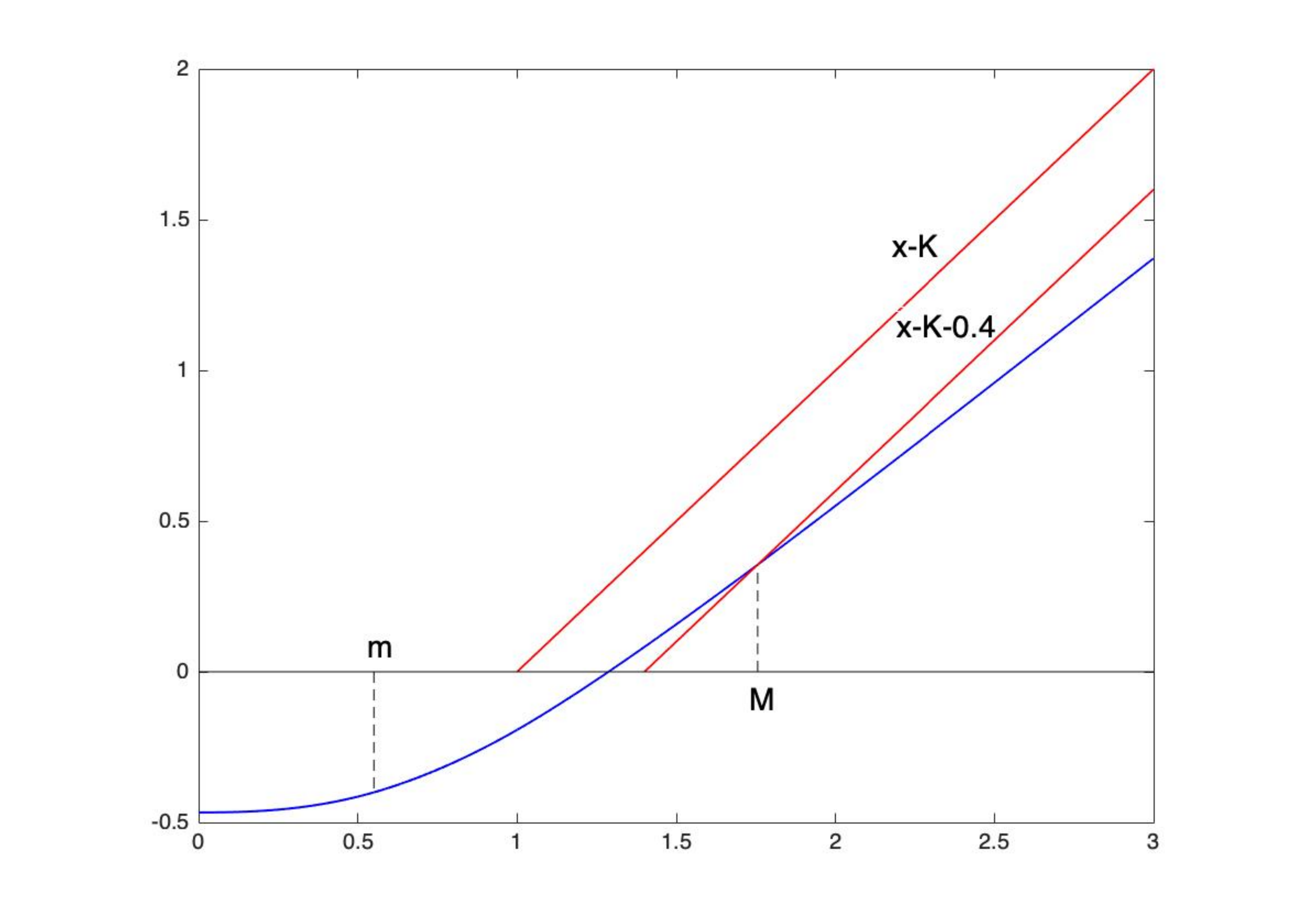}
        \caption{The value function $H$ in the case $(C(\ul)=5,C(\ol)=7)$. $b=0.4$.}
    \end{subfigure}%
    ~
    \begin{subfigure}[t]{0.45\textwidth}
        \centering
        \includegraphics[width=1\textwidth]{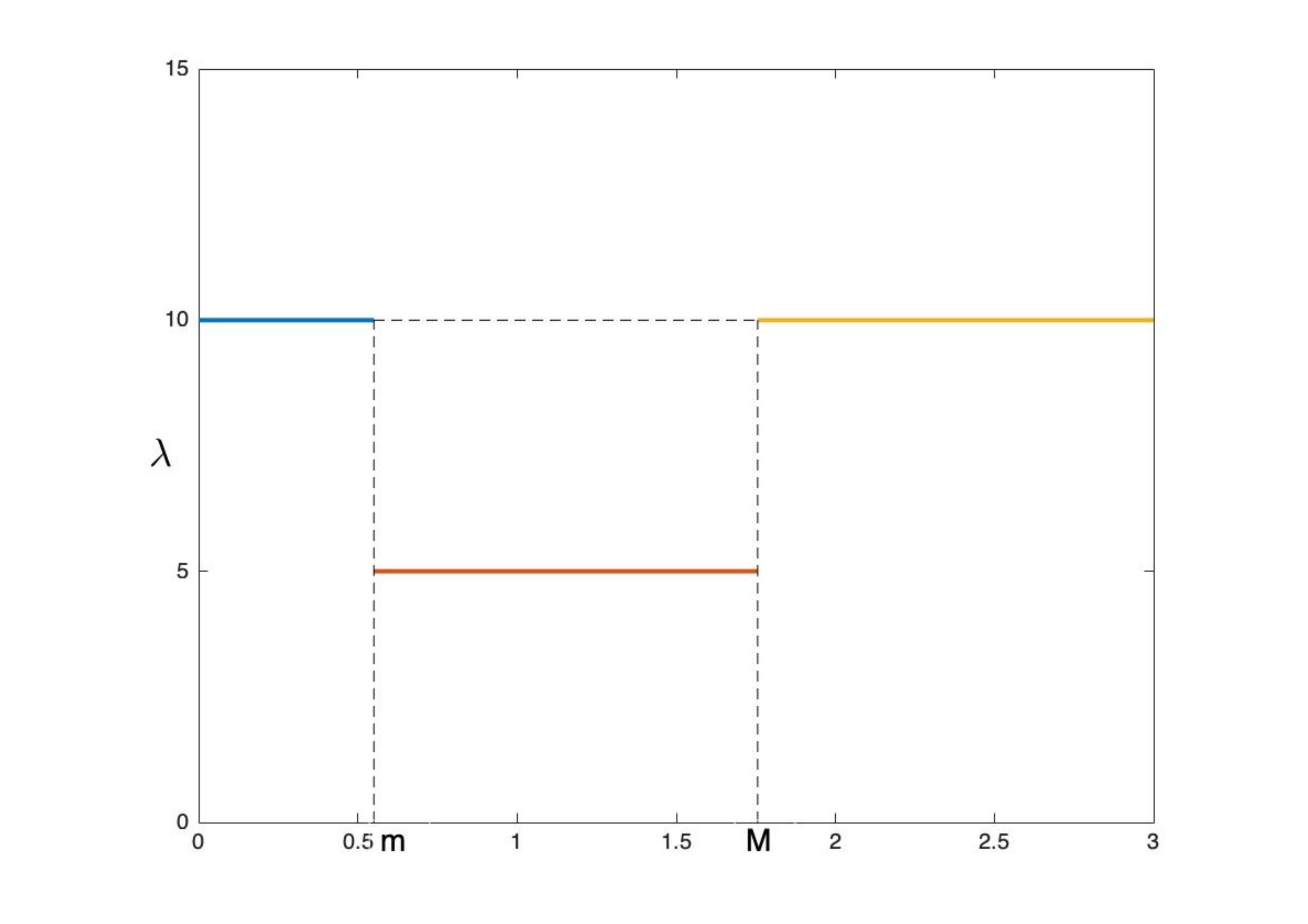}
        \caption{The optimal rate $\Lambda^*$ in the case $(C(\ul)=5,C(\ol)=7)$.}

    \end{subfigure}

    \begin{subfigure}[t]{0.45\textwidth}
        \includegraphics[width=1\textwidth]{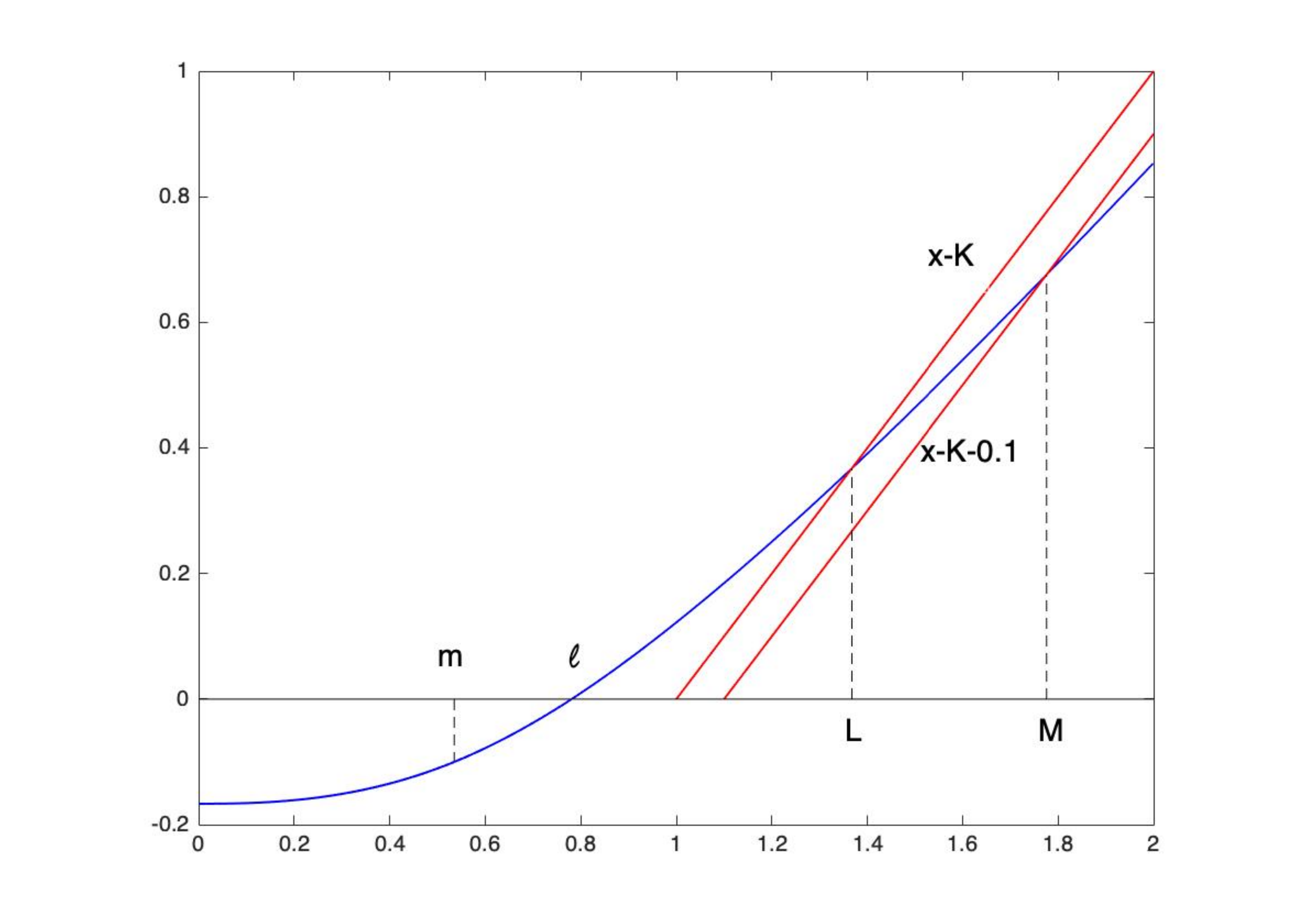}
        \caption{The value function $H$ in the case $(C(\ul)=2,C(\ol)=2.5)$. $b=0.1$.}
    \end{subfigure}%
    ~
    \begin{subfigure}[t]{0.45\textwidth}
        \centering
        \includegraphics[width=1\textwidth]{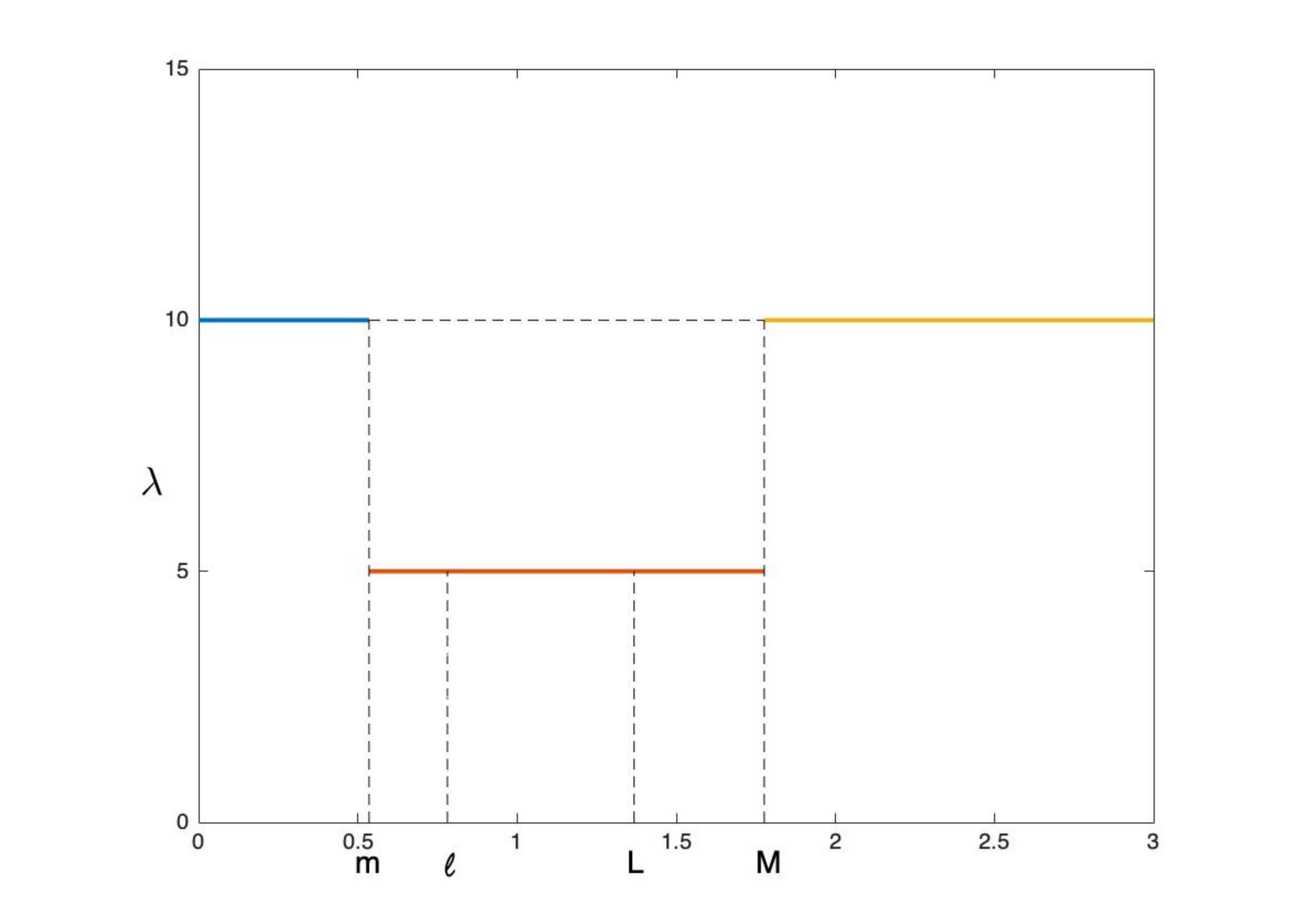}
        \caption{The optimal rate $\Lambda^*$ in the case $(C(\ul)=2,C(\ol)=2.5)$.}

    \end{subfigure}

    \caption{$(\beta,\mu,\sigma,K,\ul, \ol) =(5, 3 ,2,1,5,10)$.
    The left column plots the value function and the right column plots the optimal rate function. In each row $\frac{C(\ul)}{\ul + \beta} > \frac{C(\ul)}{\ul + \beta}$.  Near $x=0$ it is always preferable to choose the maximum possible rate process. Costs decrease as we move down the rows.
    }

    \label{fig:subinterval3}
 \end{figure*}

As a limiting special case suppose $\ol = \ul = \hl$ and that $C(\hl)=c \in [0,\infty)$. Then there is a single threshold $L$ to be determined and $H$ is of the form
\[ H(x) = \left\{ \begin{array}{ll} A x^{\theta} - \frac{c}{\beta} & x \leq L \\
                                   B x^{\hat{\phi}} + \frac{\hl}{\hl + \beta - \mu} x - \frac{(c+\hl K)}{\beta + \hl} & x > L
                                   \end{array} \right. \]
where $\hat{\phi}$ is the negative root of $Q_{\hl}(\cdot) = 0$.
The value matching condition $H(L)=(L-K)$ gives that $A = L^{-\theta}(L - K + \frac{c}{\beta})$ and
\[ B = L^{-\hat{\phi}}\left\{ \left( \frac{\beta - \mu}{\hl + \beta - \mu} \right) L + \frac{c - \beta K}{\beta + \hl} \right\}. \]
Then first order smooth fit at $L$ implies that
\[ L  = (\beta K - c) \left[ \frac{\theta}{\beta} - \frac{\hat{\phi}}{\beta + \hl} \right]\left\{ \theta - \frac{\hat{\phi}(\beta - \mu)}{\hl + \beta - \mu} - \frac{\hl}{\hl + \beta - \mu} \right\}^{-1} . \]
Note that if we take $c=0$ we recover exactly the expressions in (3.12) and (3.13) of Dupuis and Wang~\cite{DupuisWang:03}.

\section{Conclusion and discussion}
Our goal in this article is to extend the analysis of Dupuis and Wang~\cite{DupuisWang:03} who considered optimal stopping problems where the stopping time was constrained to lie in the event times of a Poisson process, to allow the agent to affect the frequency of those event times. The motivation was to model a form of illiquidity in trading and to consider problems in which the agent can exert effort in order to increase the opportunity set of candidate moments when the problem can terminate. This notion of effort is different to the idea in the financial economics literature of managers expending effort in order to change the dynamics of the underlying process, as exemplified by Sannikov~\cite{Sannikov:08}, but seems appropriate for the context.

Our work focuses on optimal stopping of an exponential Brownian motion under a perpetual call-style payoff, although it is clear given the work of Lempa~\cite{Lempa:07} how the analysis could be extended to other diffusion processes and other payoff functions. Nonetheless, even in this specific case we show how it is possible to generate a rich range of possible behaviours, depending on the choice of cost function. In our time-homogeneous, Markovian set-up, the rate of the Poisson process can be considered as a proxy for effort, and the problem can be cast in terms of this control variable. Then, the form of the solution depends crucially on the shape of the cost function, as a function of the rate of the inhomogeneous Poisson process.

One important quantity is the limiting value for large $\lambda$ of the average cost $\frac{C(\lambda)}{\lambda}$. If this limit is infinite, then the agent does not want to select very large rates for the Poisson process as they are too expensive. In this case we can replace $C$ with its convex minorant and solve the problem for that cost function. However, if $C$ is concave and the set of possible values for the rate process is unbounded then when the asset is sufficiently in the money, the agent wants to choose an infinite rate function, and thus to generate a stopping opportunity immediately. Choosing a very large rate function, albeit for a short time, incurs a cost equivalent to a fixed fee for stopping, and this is reflected in the form of the value function.

Another important quantity is the value of $C$ at zero. If a choice of zero stopping rate is feasible and incurs zero cost per unit time, then the agent always has a feasible, costless choice for the rate function, and the value function is non-negative. Then, when the asset price is close to zero we expect the agent to put no effort into searching for buyers, and to wait. However, if the cost of choosing a zero rate for the Poisson process is strictly positive, then the agent has an incentive to search for offers even when the asset price is small and the payoff is zero. When the agent receives an offer they accept, because this ends their obligation to pay costs. In this way we can have a range of optimal behaviours when the asset price is small.

When the range of possible rate processes includes zero and $C$ is strictly increasing, then the agent only exerts effort to generate selling opportunities in circumstances where they would accept those opportunities. The result is that the agent stops at the first event of the Poisson process, and the optimal stopping element of the problem is trivial.
However, an interesting feature arises when there is a lower bound on the admissible rate process. Then, the agent may receive unwanted offers, which they choose to decline. In this case the agent chooses whether to accept the first offer or to continue.

We model the cost function $C$ as increasing, which seems a natural requirement of the problem. (However, if $C$ is not increasing, we can introduce a largest increasing cost function which lies below $C$, and the value function for that problem will match the solution of the original problem.) We also assume that the interval of possible values for the rate process is closed (at any finite endpoints) and that $C$ is lower semi-continuous. Neither of these assumptions is essential although they do simplify the analysis. In particular, these assumptions ensure that the minimal cost is attained, and that we do not need to consider a sequence of approximating strategies and problems.








\end{document}